\numberwithin{equation}{section}
\theoremstyle{plain}
\newtheorem{exam}{Example}[section]
\newtheorem{theorem}[exam]{Theorem}
\newtheorem{lemma}[exam]{Lemma}
\newtheorem{remark}[exam]{Remark}
\newtheorem{definition}[exam]{Definition}
\newtheorem{corollary}[exam]{Corollary}
\newtheorem{conjecture}[exam]{Conjecture}
 \def\R{\mathbb R}
 \def\N{\mathbb N}
\title{Spiderweb central configurations}
\author{Olivier H\'enot and
Christiane Rousseau, Universit\'e de Montr\'eal\thanks{This research was supported by NSERC in Canada.}}
\begin{document}

\date{}

\maketitle

\begin{abstract} 
In this paper we study spiderweb central configurations for the $N$-body problem, i.e configurations given by $N=n \times \ell+1$ masses located at the intersection points of $\ell$ concurrent equidistributed half-lines with $n$ circles  and a central mass $m_0$,
under the hypothesis that the $\ell$ masses on the $i$-th circle are equal to a positive constant $m_i$;
we allow the particular case $m_0=0$.
We focus on constructive proofs of the existence of spiderweb central configurations, which allow numerical implementation. Additionally, we prove
the uniqueness of such central configurations when $\ell \in \{2,\dots,9\}$
and arbitrary $n$ and $m_i$; under the constraint $m_1\geq m_2\geq \ldots \geq m_n$ we also prove uniqueness for $\ell \in \{10,\dots,18\}$ and $n$ not too large. We also give an algorithm providing a rigorous proof of the existence and local unicity
of such central configurations when given as input a choice of $n$, $\ell$ and $m_0, . . . ,m_n$. Finally, our numerical simulations highlight some interesting properties of the mass distribution.
\end{abstract}

\section{Introduction}

The $N$-body problem consists in describing the positions $\mathbf{r}_1(t),\dots,\mathbf{r}_N(t)$ of $N$ masses $m_1,\dots,m_N$ interacting through Newton's gravitational law, which are solutions of the  system of coupled non-linear equations

\begin{equation}\label{eq_newton}
m_i \ddot{\mathbf{r}}_i =  -\sum_{j\neq i} \frac{G m_i m_j(\mathbf{r}_i - \mathbf{r}_j)}{| \mathbf{r}_i - \mathbf{r}_j |^3}=-\frac{ \partial U (\mathbf{r})}{\partial \mathbf{r}_i} = \mathbf{F}_i(\mathbf{r})  \,,\qquad U = -\sum_{i<j} \frac{G m_i m_j}{|\mathbf{r}_i-\mathbf{r}_j|},\end{equation}
\noindent for $i=1,\dots,N$, with $\mathbf{r}=(\mathbf{r}_1,\dots,\mathbf{r}_N)\in \R^{3N}_{>0}$,  where $G$ denotes the gravitational constant.

Specific solutions, called \emph{central configurations}, arise when the acceleration of each mass-particle is proportional to the position with the same constant of proportionality (depending on time) for all masses. In this paper we are interested in \emph{spiderweb configurations} of $N=n \times \ell +1$ masses, where the masses are located at the intersection points of $\ell$ concurrent equidistributed half-lines  with $n$ circles of radii $r_1< \dots <r_n$, and a central mass $m_0$, under the hypothesis that the $n$ masses on the $i$-th circle are equal to a positive constant $m_i$, while the mass $m_0$ is allowed to vanish (see Figure~\ref{fig:intro}). 

\begin{figure}
\centering
\subfigure[$n=3$, $\ell=5$]{
\label{fig:a}
\includegraphics[height=2in]{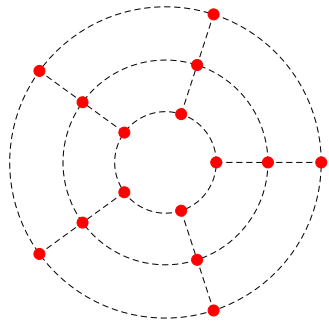}}
\hspace{8pt}
\subfigure[$n=4$, $\ell=6$]{
\label{fig:b}
\includegraphics[height=2in]{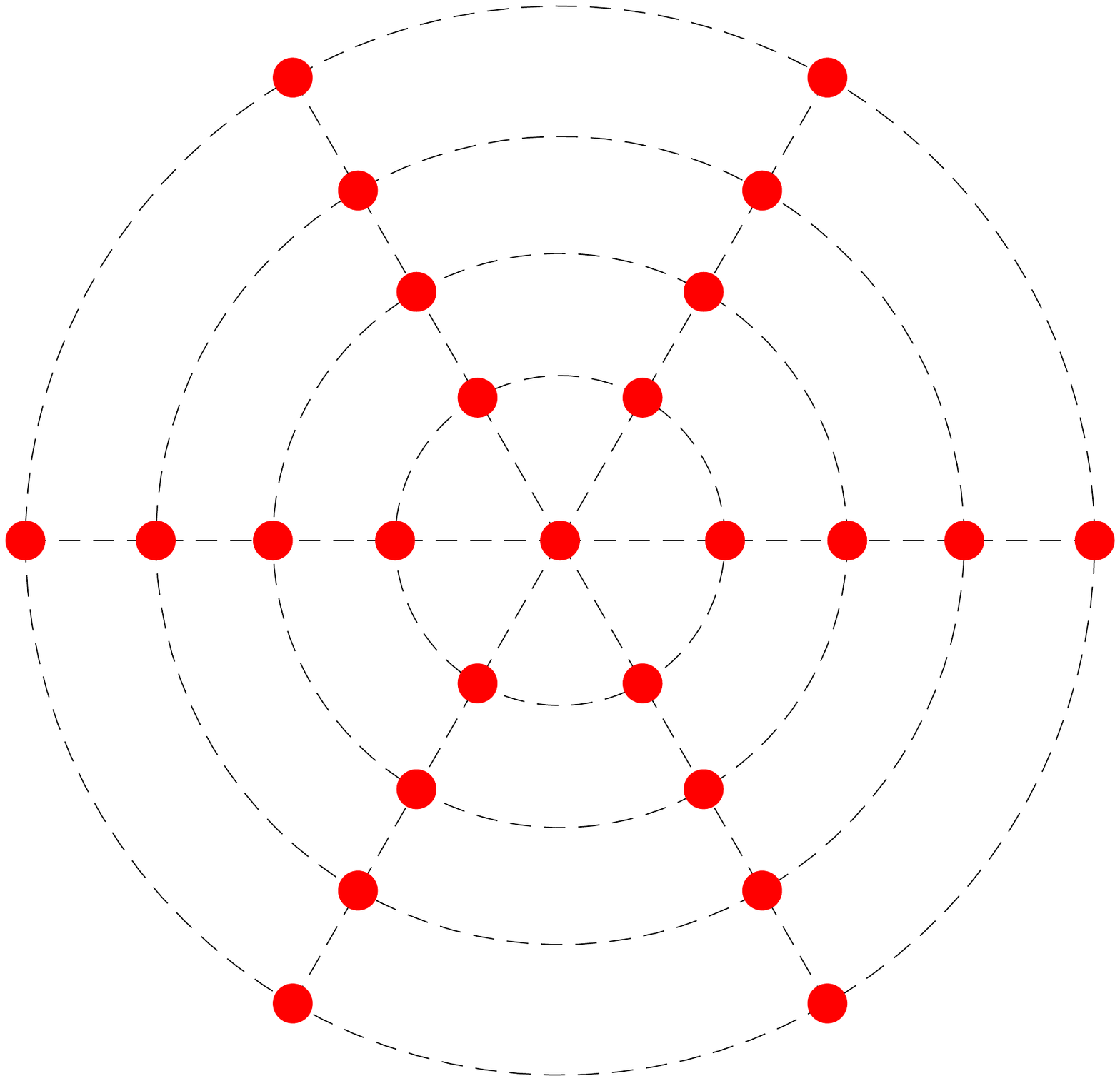}}\\
\subfigure[$n=8$, $\ell=16$]{
\label{fig:c}
\includegraphics[height=2in]{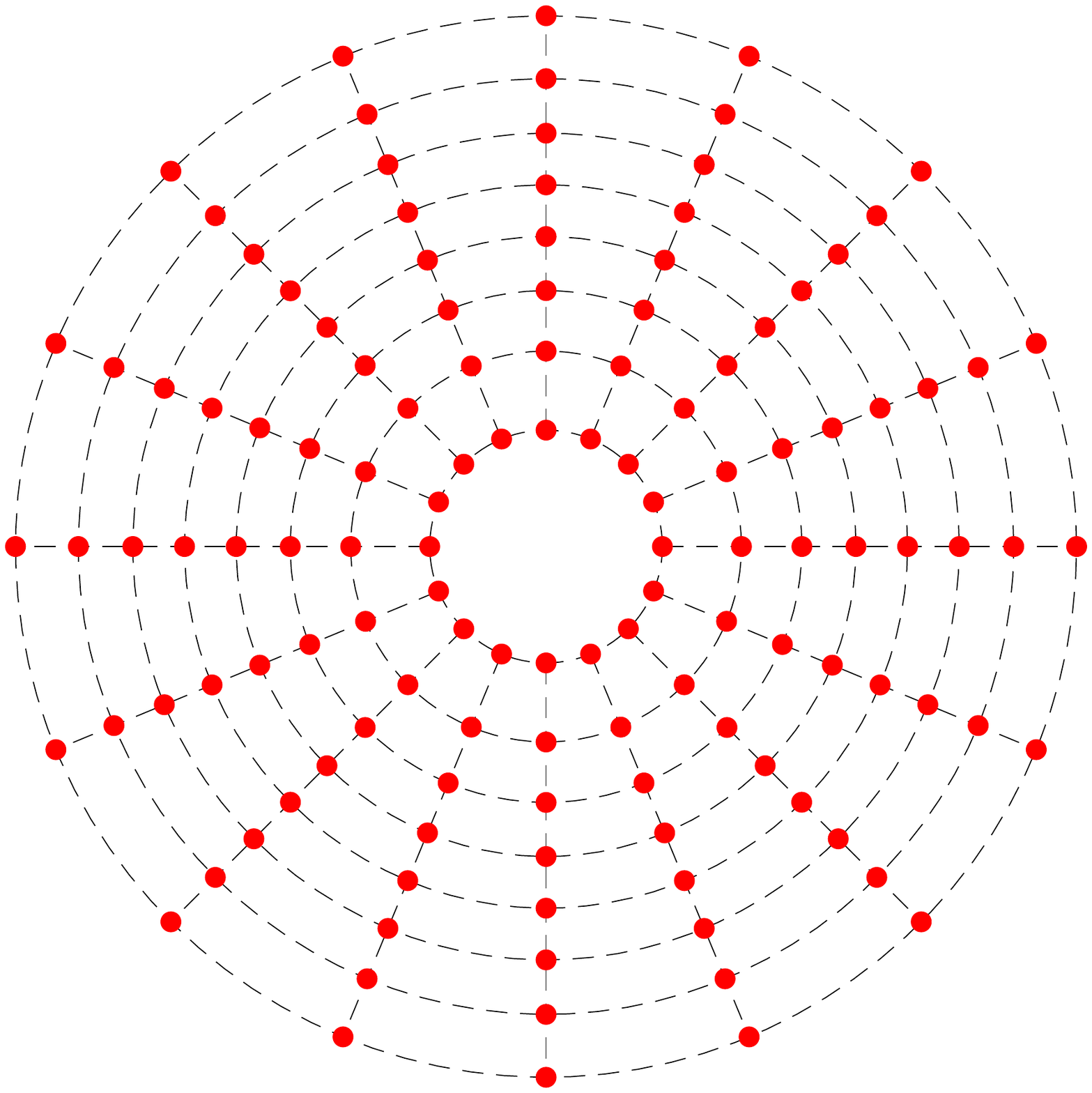}} 
\hspace{8pt}
\subfigure[$n=3$, $\ell=5$]{
\label{fig:d}
\includegraphics[height=2in]{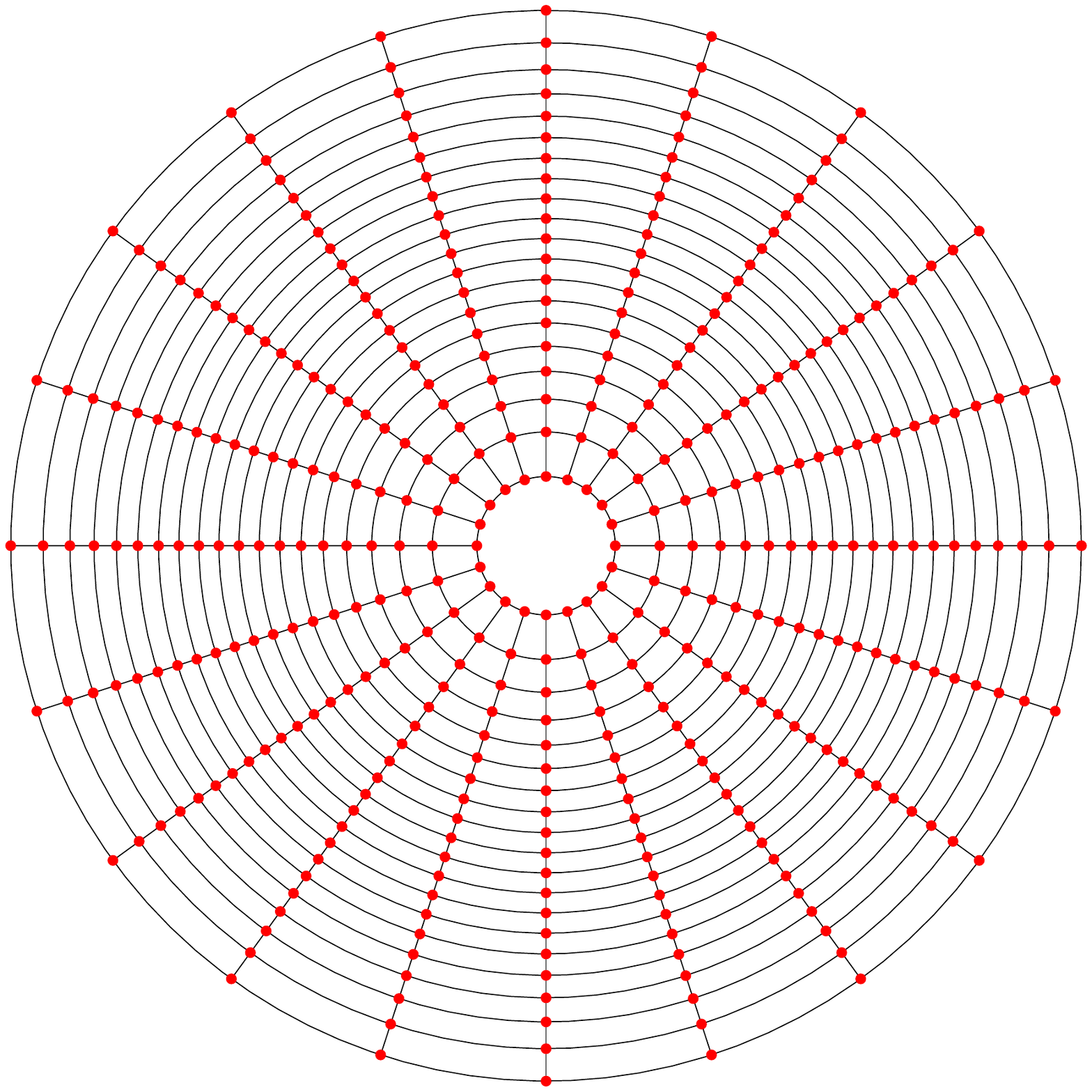}}
\caption{Spider central configurations with unitary masses. Note that $m_0\neq0$ only in (c).}
\label{fig:intro}
\end{figure}

The existence of such central configurations have been studied in the literature, often  in the special case $m_0=0$, starting with Moulton in 1910 (\cite{Mou}), which settled the case $\ell=2$ as a particular case of $N$ aligned masses. 
The case $n=1$ has been treated by Maxwell in the 19th century \cite{Maxwell}. Later Moeckel and Simo \cite{MS} proved the existence and uniqueness of such a configuration in the case $n=2$ and $m_0=0$.
Corbera, Delgado and Llibre \cite{CDL} considered the case of $n$ and $\ell$ arbitrary with restrictions on the masses  of the type $m_1\gg \ldots \gg m_n$ for $m_0=0$, while Saari treated the general case, releasing the restrictions on the masses with a different method in \cite{Sa1} and \cite{Sa2}. We became interested in the problem and decided to study numerically the distribution $M(\eta)$ of mass depending of the distance $\eta$ to the origin for large values of $\ell$ and $n$. At the same time we considered the proofs of the results appearing in \cite{CDL}, \cite{Sa1} and \cite{Sa2}: to our surprise, these proofs are incomplete and we started working on completing them. We could give some complete proofs, but not for all values of $n$, $\ell$ and of the masses. But our proofs are constructive and can be implemented numerically. Our numerical experimentations suggest the uniqueness of the central configurations (as claimed by Saari), and allow exploring the mass distribution in these configurations for large values of $n$ and $\ell$. By the time a first version of this paper was ready, we learnt of the general result of Montaldi \cite{Montaldi}, giving the existence of central configurations with a symmetrical mass distribution \cite{Montaldi}. The proof of Montaldi, based on a variational formulation of the problem and using the principle of symmetric criticality of Palais, is very elegant. However, his proof is completely existential. Hence we believe that our proofs are a complement to the one provided by Montaldi in \cite{Montaldi}. Additionally, for $\ell \in \{2,\dots,9\}$ and a few other particular cases, we could prove the uniqueness of the central configurations.

The proof of Corbera, Delgado and Llibre is by induction on $n$. To go from $n$ circles to $n+1$ circles, the idea is to add an $n+1$-th circle with masses $m_{n+1}=0$ and to allow the mass $m_{n+1}$ to increase, via the implicit function theorem. The use of the implicit function theorem requires some invertible Jacobian. Since the authors could not prove that the Jacobian is invertible, they use an uniqueness argument to claim that the unique solution can be extended for nonzero values of $m_{n+1}$. The argument is not valid as shown by the following counter-example $f(x,y,m) = (x^2+y^2+m^2,y+m)$, which has a unique zero for $m=0$ and no zero for $m\neq0$. However the proof can easily be repaired and we include in the paper a proof of the existence of such configurations for $m_1\gg \ldots \gg m_n$, which is much shorter than the one in \cite{CDL}.

By adapting the method of \cite{CDL} in the spirit of Moulton \cite{Mou}, i.e. starting from a restricted $N$-body problem and following the solutions by the implicit function theorem for large values of the $m_i$, we were able to prove the existence and uniqueness of spiderweb central configurations for $\ell \in \{2,\dots,9\}$
and arbitrary $n$ and $m_i$. Under constraints on the mass distribution and the maximum number of circles, the uniqueness is also proven for $\ell \in \{10,\dots,18\}$.

In \cite{Sa2}, Saari proposed a proof of the existence of spiderweb central configurations in the general case. There, again, the proof was by induction on $n$, and used continuity arguments, which had to rely on the implicit function theorem. But no checking of the hypothesis of the implicit function theorem could be found. Our checking of these hypotheses revealed much harder than expected, but we could adapt the method of Saari and prove the existence of spiderweb central configurations for arbitrary $\ell$ and $m_i$ and $n\in\{3,4\}$.

To conclude, we give an algorithm providing a rigorous proof of the existence and local unicity
of such central configurations when given as input a choice of $n$, $\ell$ and $m_0, . . . ,m_n$. The algorithm
has been applied to all $n\leq100$ and all even values $\ell\leq 200$ when $m_0=0$ and all masses are equal. We have also applied it in the case of different masses.
Our numerical explorations allowed us studying the profile of the function $M(\eta)$ describing the distribution of mass at the distance $\eta$ from the center of mass. This profile reveals universal features that are quite interesting.

The paper is organized as follows. Section~\ref{sec:prel} contains preliminaries. Section~\ref{sec:general} shows the existence of spiderweb central configurations with $N=n\times \ell+1$ or $N= n\times\ell$, and arbitrary $n$ and $\ell$. In Section~\ref{sec:analy2} we prove the existence and uniqueness of spiderweb central configurations for $\ell \in \{2,\dots,9\}$,
and arbitrary $n$ and $m_i$ in the spirit of \cite{Mou}, while in Section~\ref{sec:analy1} we give a constructive proof of the existence of spiderweb configurations for $n\in\{3,4\}$  and arbitrary $\ell$.
Finally Section~\ref{sec:numer} deals with the numerical algorithm providing rigorous proof of existence, while Section~\ref{sec:mass} studies the properties of  the function $M(\eta)$. 

\section{Preliminaries}\label{sec:prel}

\subsection{Scalings and central configurations in \eqref{eq_newton}}\label{sec:scalings}

For simplicity, we translate the center of mass at the origin. Considering changes $(r,m,t)\mapsto (Ar,Bm,Ct)$ in the units of length, mass and time 
satisfying $A^3B^{-1}C^{-2}=G$ scales $G=1$. There remains two degrees of freedom: indeed additional changes preserve $G=1$ provided $A^3=BC^2$.

\begin{definition}\label{def:CC}
The configuration of $N$ bodies is \emph{central} at some time $t^*$ if $\ddot{\mathbf{r}}(t^*) = \lambda \mathbf{r}(t^*)$ for some common $\lambda$, where $\mathbf{r}=(\mathbf{r}_1,\dots,\mathbf{r}_N)\in \R^{3N}_{>0}$.
\end{definition}

\begin{remark}
The previous definition suggests that being a central configuration is a characteristic of the precise time $t^*$. However, it is well-known that, for well-chosen initial velocities, the $N$ bodies remain in a central configuration for all time $t$; during the motion of the $N$ bodies, the common $\lambda$ is a function of $t$.
\end{remark}

It is easy to see that $\lambda$ is a strictly negative value given by $\lambda=U/I<0$ where $I = \sum m_i \mathbf{r}_i^2$ is the moment of inertia. A scaling in time allows to take $\lambda=-1$. Keeping $\lambda=-1$, 
if $m \in \R_{\geq 0}^N$ is the vector of masses, then if $(\mathbf{r},m)$ is a central configuration, so is $(A\mathbf{r},A^3 m)$.

Moreover, by the definition of a central configuration and the equation of motion \eqref{eq_newton}, any homothety and rotation of the positions, i.e. $\mathbf{r}=(\mathbf{r}_1,\dots,\mathbf{r}_N) \mapsto A\,\Omega \mathbf{r}$ where $A>0$ and $\Omega \in SO(3)$, yields a central configuration with $\lambda/\gamma^3$. Hence, when discussing the uniqueness of central configurations we mean the uniqueness of the equivalence class for the equivalence relation $(\mathbf{r},m, \lambda )\equiv \left(A\,\Omega\mathbf{r}, Bm, \frac{\lambda}{A^3C^2}\right)$ under the condition $A^3=BC^2$ which we can also write

\begin{equation}
(\mathbf{r},m, \lambda )\equiv \left(A\,\Omega\mathbf{r}, \frac {A^3m}{C^2}, \frac{\lambda}{A^3C^2}\right).\label{equiv_relation}
\end{equation}

\subsection{Spiderweb configurations}

We consider \textit{spiderweb central configurations} formed by $n\times\ell$ masses located at the intersection points of $n$ circles centered at the origin of radii $r_1,\dots,r_n$, with $\ell$ half-lines starting at the origin,
whose angle with the positive $x$-axis is $\theta_k =2 \pi k / \ell$ for $k=0,\dots,\ell-1$, together with a mass $m_0$ placed at the origin, under the hypothesis that the $\ell$ masses on the $i$-th circle are equal to a positive constant $m_i$.

By symmetry, it is clear that the gravitational tug $\mathbf{F}_0$ on the mass $m_0$ located at the origin is identically zero, and  thus $\ddot{\mathbf{r}}_0=\lambda \mathbf{r}_0$ for any $\lambda$.

Let

\[
\mathcal{R}^n = \{ r \in \R^n \, : \, 0 < r_1 < \ldots < r_n\}.
\]

\noindent Rearranging the terms and using the symmetry in the equation \eqref{eq_newton} so that there are $n$ bodies on the positive horizontal axis, it suffices to consider the following system

\begin{align}\label{eq_spiderweb}
    \ddot{r}_i = -\sum_{k=1}^{\ell-1} \frac{m_i }{2^{3/2}r_i^2( 1 - \cos \theta_k )^{1/2}} -\frac{ m_0 }{r_i^2} - \sum_{\begin{smallmatrix}j=1\\j\neq i \end{smallmatrix}}^n \sum_{k=0}^{\ell-1} \frac{m_j( r_i - r_j \cos \theta_k )}{( r_i^2 + r_j^2 - 2 r_i r_j \cos \theta_k )^{3/2}} ,
\end{align}

\noindent for $i=1,\dots,n$, with $\theta_k = \frac{2\pi k}{\ell}$ and $r = (r_1,\dots,r_n)\in \mathcal{R}^n$.

\subsection{Tools}

Under the previous reduction to \eqref{eq_spiderweb}, the contribution of the gravitational force on the mass located at $(r_i,0)$ is $F_i(r) = \sum_{j=1}^n F_{ij}(r_i,r_j)$, where $F_{ij}( r_i,r_j)$ is the contribution of $F_i(r)$ coming from the attraction of the $j$-{th} circle, given by

\[
\frac{F_{ij}(r_i,r_j)}{m_i} =
\begin{cases}
\displaystyle -\frac{ m_0 }{r_i^2}, & j=0,\\
\displaystyle -\sum_{k=1}^{\ell-1} \frac{m_i }{2^{3/2}r_i^2( 1 - \cos \theta_k )^{1/2}}, & j=i, \\
\displaystyle -\sum_{k=0}^{\ell-1} \frac{m_j(r_i-r_j \cos \theta_k)}{( r_i^2 + r_j^2 - 2 r_i r_j \cos \theta_k )^{3/2}}, & j\neq i \text{ and } j>0,
\end{cases}
\]

We introduce

\[
\zeta_\ell=\sum_{k=1}^{\ell-1} \frac{1}{( 1 - \cos \theta_k )^{1/2}}\,, \] and 
\begin{equation}
\begin{cases}
x_j=r_i/r_j, & \text{si } j>i,\\
y_j=r_j/r_i, & \text{si } j<i,\\
\end{cases}
\label{def:x}\end{equation}

\noindent so that $F_{ij}(r_i,r_j)/m_i$ becomes

\[
\frac{F_{ij}}{m_i} =
\begin{cases}
\displaystyle -\frac{ m_0 }{r_i^2} , & j=0,\\
\displaystyle -\frac{m_i}{2^{3/2}r_i^2}\zeta_\ell, & j=i, \\
\displaystyle -\frac{m_j}{r_i^2}\sum_{k=0}^{\ell-1} \frac{1-y_j\cos\theta_k}{(1+y_j^2-2y_j\cos \theta_k)^{3/2}}, & 0<j<i, \\
\displaystyle -\frac{m_j x_j^2}{r_i^2}\sum_{k=0}^{\ell-1} \frac{x_j-\cos\theta_k}{(1+x_j^2-2x_j\cos \theta_k)^{3/2}}, & j >i.
\end{cases}
\]

Let 
\begin{equation}\phi_\nu(x) = \sum_{k=0}^{\ell-1}\frac{1}{d_k^\nu(x)},\qquad d_k(x)= (1+x^2-2x\cos\theta_k)^{1/2},\label{def:phi}\end{equation} so that

\begin{align*}
\sum_{k=0}^{\ell-1} \frac{x_j-\cos\theta_k}{(1+x_j^2-2x_j\cos \theta_k)^{3/2}}
&=-\phi'_1(x_j),\\
\sum_{k=0}^{\ell-1} \frac{1-y_j\cos\theta_k}{(1+y_j^2-2y_j\cos \theta_k)^{3/2}}
&= \sum_{k=0}^{\ell-1} \frac{(1+y_j^2-2y_j\cos \theta_k)-y_j^2+y_j \cos \theta_k}{(1+y_j^2-2y_j\cos \theta_k)^{3/2}}\\
&= \phi_1(y_j) + y_j\phi_1'(y_j).
\end{align*}

Whence,

\begin{equation}\label{Fij}
\frac{F_{ij}}{m_i} =
\begin{cases}
\displaystyle -\frac{ m_0 }{r_i^2}\,, & j=0,\\
\displaystyle -\frac{ m_i }{2^{3/2}r_i^2}\zeta_\ell, & j=i, \\
\displaystyle -\frac{m_j}{r_i^2}\left(\phi_1(y_j)+y_j\phi_1'(y_j)\right), & 0<j<i, \\
\displaystyle \frac{m_j x_j^2}{r_i^2} \phi_1'(x_j), & j >i,
\end{cases} \qquad \text{pour } i=1,\dots,n.
\end{equation}

\begin{lemma}\cite{MS}\label{lem:MS}
Let $\phi_\nu(x) = \sum_{k=0}^{\ell-1} 1 / d_k^\nu(x)$ with $d_k(x)= (1+x^2-2x\cos\theta_k)^{1/2}$ and $\nu>0$. Then, for $-1<x<1$, $\phi_\nu(x)$ is analytic and all the coefficients of its power series expansion are positive. In particular, for $0<x<1$, $\phi_1(x)$ is analytic and all its derivative are positive.
\end{lemma}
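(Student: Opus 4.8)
The plan is to expand each term $d_k^{-\nu}(x)$ in powers of $x$ about $x=0$ and then read off the Taylor coefficients of $\phi_\nu$ from the sum over $k$. The key is the factorization
\[
d_k^2(x)=1+x^2-2x\cos\theta_k=(1-xe^{i\theta_k})(1-xe^{-i\theta_k}),
\]
so that for $|x|<1$, where both factors lie in the disc of radius $1$ about $1$ and hence away from the branch cut of the principal power, we may write $d_k^{-\nu}(x)=(1-xe^{i\theta_k})^{-\nu/2}(1-xe^{-i\theta_k})^{-\nu/2}$ and apply the generalized binomial series $(1-w)^{-\nu/2}=\sum_{m\ge0}c_m w^m$ with $c_m=\binom{m+\nu/2-1}{m}=\frac{(\nu/2)(\nu/2+1)\cdots(\nu/2+m-1)}{m!}$, which is strictly positive for every $m\ge0$ precisely because $\nu>0$ (the case $\nu=1$ is not special for this argument).

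Next I would form the Cauchy product of the two absolutely convergent series, giving
\[
d_k^{-\nu}(x)=\sum_{p\ge0}x^p\sum_{a=0}^{p}c_a c_{p-a}\,e^{i(2a-p)\theta_k},
\]
and sum over $k=0,\dots,\ell-1$ (a finite sum, so the interchange with $\sum_p$ is harmless). Since the $\theta_k$-dependence sits entirely in $\sum_{k=0}^{\ell-1}e^{i(2a-p)\theta_k}$, a geometric sum equal to $\ell$ when $\ell\mid(2a-p)$ and $0$ otherwise, the coefficient of $x^p$ in $\phi_\nu$ is $a_p=\ell\sum_{a:\,\ell\mid(2a-p)}c_a c_{p-a}$, a sum of nonnegative terms; moreover it is strictly positive at least whenever $p$ is even, since then $a=p/2$ contributes $\ell\,c_{p/2}^2>0$. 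This delivers the nonnegativity (and, generically in $p$, the strict positivity) of all Taylor coefficients of $\phi_\nu$, in particular of $\phi_1$.

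Finally, the series has radius of convergence exactly $1$: the only singularities of $x\mapsto d_k^{-\nu}(x)$ lie at $x=e^{\pm i\theta_k}$ on the unit circle, and for $k=0$ this is the genuine singularity $x=1$. Hence $\phi_\nu$ is analytic on $(-1,1)$ (being a finite sum of functions that are real-analytic there, each a positive polynomial raised to a real power) and is not a polynomial, so infinitely many coefficients $a_p$ are strictly positive; consequently, for $0<x<1$, every derivative $\phi_\nu^{(j)}(x)=\sum_{p\ge j}\frac{p!}{(p-j)!}a_p x^{p-j}$ is a sum of nonnegative terms with at least one positive term, whence $\phi_\nu^{(j)}(x)>0$, which is the ``in particular'' claim for $\phi_1$. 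I do not expect a genuine obstacle; the only point requiring care is the phrasing of the positivity statement, since it is \emph{not} literally true for every $\ell$ that every coefficient is strictly positive (for $\ell=2$ one has $d_0=1-x$, $d_1=1+x$, so $\phi_\nu(x)=(1-x)^{-\nu}+(1+x)^{-\nu}$ is an even function and all odd-order coefficients vanish), whereas nonnegativity together with infinitely many strictly positive coefficients always holds and suffices for every application of the lemma.
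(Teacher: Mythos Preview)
Your proof is correct and is essentially the argument in Moeckel--Simo \cite{MS}, which the paper merely cites without reproducing: there the expansion is packaged via the Gegenbauer generating function $(1-2t\cos\theta+t^2)^{-\nu/2}=\sum_{n\ge0} C_n^{(\nu/2)}(\cos\theta)\,t^n$, but unwinding $C_n^{(\nu/2)}(\cos\theta_k)$ yields exactly your Cauchy-product formula and the same root-of-unity orthogonality sum. Your caveat that the coefficients are in general only nonnegative, with strict positivity guaranteed at all even orders (the case $\ell=2$ giving an even function), is well taken and is all that the paper ever uses downstream.
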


\begin{lemma}\label{lem:dico}
Let $F_{ij}$ defined by \eqref{Fij} and

\[
\lambda_{ij} = \frac{F_{ij}}{m_i r_i},
\]

\noindent such that $\lambda_i=\sum_{j=1}^n\lambda_{ij}$ for $i=1,\dots,n$. We have the four following properties:
\begin{enumerate}
\item
\[
F_{ij}
\begin{cases}
>0, &i<j,\\
<0, &i\geq j,
\end{cases}
\qquad \text{and} \qquad
\lambda_{ij}
\begin{cases}
>0, &i<j,\\
<0, &i\geq j.
\end{cases}
\]
\item $\partial_{r_j} \lambda_i<0$ for all $i\neq j$. 
\item $\partial_{r_i} \lambda_i >0$ for all $i$.
\item $0>\partial_{r_k} \lambda_{ik}>\partial_{r_k} \lambda_{j k}$ for all $i<j<k$.
\item $\partial_{r_k} \lambda_{j k}<\partial_{r_k} \lambda_{ik}<0$ for all $0<k<j<i$.
\end{enumerate}
\end{lemma}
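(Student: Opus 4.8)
The plan is to reduce each of (1)--(5) to elementary monotonicity facts about $\phi_1$ on $(0,1)$. First I would record, from \eqref{Fij} and the definitions \eqref{def:x}, the explicit shape of $\lambda_{ij}=F_{ij}/(m_ir_i)$ in the reduced ratios: setting $g(y)=\phi_1(y)+y\phi_1'(y)$ and $\psi(x)=\phi_1'(x)/x$ one gets $\lambda_{ij}=\frac{m_j}{r_j^3}\psi(r_i/r_j)$ for $j>i$, $\lambda_{ij}=-\frac{m_j}{r_i^3}g(r_j/r_i)$ for $0<j<i$, $\lambda_{ii}=-\frac{m_i\zeta_\ell}{2^{3/2}}r_i^{-3}$, and $\lambda_{i0}=-m_0r_i^{-3}$; crucially every argument $r_i/r_j$ or $r_j/r_i$ lies in $(0,1)$ since $r\in\mathcal R^n$. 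By Lemma~\ref{lem:MS}, $\phi_1$ and all its derivatives are positive on $(0,1)$, hence so are $g$, $g'=2\phi_1'+y\phi_1''$, $g''=3\phi_1''+y\phi_1'''$ and $\psi$; together with $\zeta_\ell>0$ this immediately gives (1), and $F_{ij}$ has the same sign as $\lambda_{ij}$ because $m_ir_i>0$.

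For (2) I would note that, with $i\neq j$ fixed, only the summand $\lambda_{ij}$ of $\lambda_i=\sum_k\lambda_{ik}$ depends on $r_j$; differentiating and tracking the signs of $\partial_{r_j}(r_j/r_i)>0$ and $\partial_{r_j}(r_i/r_j)<0$ yields $\partial_{r_j}\lambda_{ij}=-\frac{m_j}{r_i^4}g'(r_j/r_i)<0$ if $j<i$ and $\partial_{r_j}\lambda_{ij}=-\frac{m_j}{r_j^4}Q(r_i/r_j)<0$ if $j>i$, where $Q(x):=3\psi(x)+x\psi'(x)=\frac{2\phi_1'(x)}{x}+\phi_1''(x)>0$ on $(0,1)$. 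Property (3) is the sum over $k$ of $\partial_{r_i}\lambda_{ik}$: the self-term contributes $\frac{3m_i\zeta_\ell}{2^{3/2}}r_i^{-4}>0$, the central term $3m_0r_i^{-4}\ge0$, each inner term ($k<i$) contributes $\frac{m_k}{r_i^4}\bigl(3g(y_k)+y_kg'(y_k)\bigr)>0$ with $y_k=r_k/r_i$, and each outer term ($k>i$) contributes $\frac{m_k}{r_k^4}\psi'(r_i/r_k)$, which is $\ge0$ as soon as $\psi'\ge0$ on $(0,1)$.

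That last monotonicity — equivalently $x\phi_1''(x)\ge\phi_1'(x)$, i.e. $x\mapsto\phi_1'(x)/x$ nondecreasing — is the one ingredient that goes slightly beyond Lemma~\ref{lem:MS}; I would get it from $\phi_1'(0)=\sum_{k=0}^{\ell-1}\cos\theta_k=0$, so, $\phi_1''$ being increasing, $\phi_1'(x)=\int_0^x\phi_1''(t)\,dt\le x\phi_1''(x)$. The same fact gives $Q'=2\psi'+\phi_1'''>0$, so $Q$ is increasing on $(0,1)$, which is exactly what (4) needs: for $i<j<k$ both $\lambda_{ik},\lambda_{jk}$ are of ``outer'' type, $\partial_{r_k}\lambda_{ik}=-\frac{m_k}{r_k^4}Q(r_i/r_k)<0$ and likewise with $j$, and since $r_i/r_k<r_j/r_k$ monotonicity of $Q$ forces $\partial_{r_k}\lambda_{ik}>\partial_{r_k}\lambda_{jk}$. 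Property (5) is parallel but needs only positivity of $\phi_1',\phi_1'',\phi_1'''$: for $k<j<i$ both $\lambda_{jk},\lambda_{ik}$ are of ``inner'' type, $\partial_{r_k}\lambda_{jk}=-\frac{m_k}{r_j^4}g'(r_k/r_j)<0$, and the inequality $\partial_{r_k}\lambda_{jk}<\partial_{r_k}\lambda_{ik}$ reduces to $r\mapsto r^{-4}g'(r_k/r)$ being decreasing on $(r_k,\infty)$, equivalently $y\mapsto y^4g'(y)$ increasing on $(0,1)$, which holds since $\frac{d}{dy}\bigl(y^4g'(y)\bigr)=y^3\bigl(4g'(y)+yg''(y)\bigr)>0$.

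The routine parts are the chain-rule bookkeeping and the organization into the three types of interaction term; the genuinely delicate steps I expect to be (i) establishing $\psi'\ge0$, which forces one to go beyond the plain positivity of Lemma~\ref{lem:MS} and use $\phi_1'(0)=0$, and (ii) carrying the correct sign through the quotient-rule computations underlying the comparisons in (4) and (5).
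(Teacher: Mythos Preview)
Your proposal is correct and follows essentially the same route as the paper: both reduce everything to the positivity of $\phi_1$ and its derivatives via Lemma~\ref{lem:MS}, and both identify $\phi_1'(0)=\sum_k\cos\theta_k=0$ as the one extra ingredient needed for $x\phi_1''(x)\ge\phi_1'(x)$ (the paper extracts this from the power series of $\phi_1$, you from the integral $\phi_1'(x)=\int_0^x\phi_1''$ together with monotonicity of $\phi_1''$). Your bundling of the $r_s^{-4}$ prefactor into the single increasing function $y\mapsto y^4g'(y)$ in part~(5) is in fact a bit tidier than the paper's own argument there.
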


\begin{proof}
\begin{enumerate}
\item Direct consequence of Lemma~\ref{lem:MS}.

\item We have

\[
\lambda_{ij}=
\begin{cases}
-\frac{m_0}{r_i^3}, & j=0,\\
- \frac{m_j}{r_i^3}(\phi_1(y_j)+y\phi_1'(y_j)), & 0<j<i ,\\
\frac{m_j}{r_i^3}x_j^2\phi_1'(x_j), & j>i,
\end{cases}
\]

\noindent and the chain rule give

\[
\partial_{r_j} \lambda_{ij}=
\begin{cases}
0, & j=0,\\
\frac{\partial y_j}{\partial r_j}\frac{\partial \lambda_{ij}}{\partial y_j} = \frac{1}{r_i}\frac{\partial \lambda_{ij}}{\partial y_j} < 0, & 0<j<i ,\\
\frac{\partial x_j}{\partial r_j}\frac{\partial \lambda_{ij}}{\partial x_j} = -\frac{r_i}{r_j^2}\frac{\partial \lambda_{ij}}{\partial x_j} < 0, & j>i,
\end{cases}
\]

\noindent by Lemma~\ref{lem:MS}.

\item When $i=j$, $\lambda_{i0}= -\frac{m_0}{r_i^3}$ and $\lambda_{ii}= -\frac{ m_i }{2^{3/2}r_i^3}\zeta_\ell$, which are both increasing in $r_i$. For $0<j<i$, by Lemma~\ref{lem:MS}, we have

\[
\frac{\partial \lambda_{ij}}{\partial r_i}
= m_j\left(\frac3{r_i^4}(\phi_1(y_j)+y\phi_1'(y_j)) + \frac{r_j}{r_i^5}\frac{\partial}{\partial y_j}(\phi_1(y_j)+y\phi_1'(y_j))\right) >0.
\]

For $j>i$, we have

\[
\frac{\partial \lambda_{ij}}{\partial r_i}=
\frac{m_j}{r_j^3}\frac{\partial x_j}{\partial r_i} \frac{\partial }{\partial x_j}\left(\frac{\phi_1'(x_j)}{x_j} \right)
=\frac{m_j}{r_j^4}\frac{x_j\phi_1''(x_j)-\phi_1'(x_j)}{x^2_j}.
\]

By Lemma~\ref{lem:MS}, $\phi_1(x)$ is analytic on $(0,1)$ so $\phi_1(x)= \sum_{n\geq0} a_n x^n$ with $a_n\geq 0$. Moreover, $\sum_{k=0}^{\ell-1}e^{i\theta_k}=0$ (since it is the sum of the roots of $z^\ell-1=0$), yielding $a_1 = \phi_1'(0)= \sum_{k=0}^{\ell-1}\cos \theta_k =0$. Hence,

\[
x\phi_1''(x)-\phi_1'(x)=\sum_{n\geq2}a_n n(n-1)x^{n-1}-\sum_{n\geq1}a_n n x^{n-1}=\sum_{n\geq2}a_n n(n-2)x^{n-1}>0.
\]

\item Let $k>j>i$ and define $x_s=r_s/r_k$ for $s=i,j$. The derivative according to $r_k$ is

\[
\frac{\partial \lambda_{sk}}{\partial r_k}= \frac{m_k}{r_s^3}\frac{\partial x_s}{\partial r_k}\frac{\partial}{\partial x_s}\left(x_s^2\phi_1'(x_s)\right) = -\frac{m_k}{r_k^4}\frac{x_s\phi_1''(x_s)+2\phi_1'(x_s)}{x_s}<0.
\]

Now, because $a_1=0$, $(x\phi_1''(x)+2\phi_1'(x))/x$ is strictly positive and increasing. Since $x_i<x_j$, we deduce that $0>\partial_{r_k} \lambda_{ik}>\partial_{r_k} \lambda_{jk}$.

\item Let $0<k<j<i$ and $y_s=r_k/r_s$ for $s=i,j$. We have

\[
\frac{\partial \lambda_{s k}}{\partial r_k} = -\frac{m_k}{r_s^4} (2\phi_1'(y_s)+y_s\phi_1''(y_s))<0.
\]

The function $y\phi_1''(y)+2\phi_1'(y)$ is strictly positive and increasing. Since $y_j<y_i$, we get $\partial_{r_k} \lambda_{ik}<\partial_{r_k} \lambda_{jk}<0$.
\end{enumerate}
\end{proof}

\begin{corollary}\label{cor:dico}
Let $\Lambda = (\Lambda_1,\dots,\Lambda_{n-1}) \in \R^{n-1}$ with

\[
\Lambda_i(r)=\lambda_i(r) - \lambda_{i+1}(r),\qquad i= 1,\dots,n-1.
\]

We have the four following properties:
\begin{enumerate}
    \item $\partial_{r_i}\Lambda_i>0$ for all $i$.
    \item $\partial_{r_{i+1}}\Lambda_i<0$ for all $i$.
    \item If $j>i+1$, then $\partial_{r_{j}}\Lambda_i>0$.
    \item If $j<i$, then $\partial_{r_{j}}\Lambda_i<0$.
\end{enumerate}
\end{corollary}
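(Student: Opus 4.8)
The plan is to reduce each of the four inequalities to the corresponding item of Lemma~\ref{lem:dico} by exploiting the separated structure of $\lambda_i$. Recall that $\lambda_i=\sum_j\lambda_{ij}$, where $\lambda_{i0}$ and $\lambda_{ii}$ depend on $r_i$ alone and every $\lambda_{ij}$ with $j\notin\{0,i\}$ is a function of the pair $(r_i,r_j)$ only. Hence, for any index $m\neq i$ with $m>0$ one has $\partial_{r_m}\lambda_i=\partial_{r_m}\lambda_{im}$, while $\partial_{r_i}\lambda_i=\sum_j\partial_{r_i}\lambda_{ij}$ is exactly the quantity whose sign is supplied by Lemma~\ref{lem:dico}(3). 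Writing $\Lambda_i=\lambda_i-\lambda_{i+1}$ and differentiating, every partial derivative of $\Lambda_i$ turns into a difference of two terms, each of which is signed by some item of Lemma~\ref{lem:dico}, and no genuine computation is left to do.

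Concretely, for item (1) we have $\partial_{r_i}\Lambda_i=\partial_{r_i}\lambda_i-\partial_{r_i}\lambda_{i+1,i}$, where the first term is $>0$ by Lemma~\ref{lem:dico}(3) and the second is $<0$ by Lemma~\ref{lem:dico}(2) (applied to the pair of indices $i+1\neq i$), so the difference is $>0$. Symmetrically, for item (2), $\partial_{r_{i+1}}\Lambda_i=\partial_{r_{i+1}}\lambda_{i,i+1}-\partial_{r_{i+1}}\lambda_{i+1}$, where now the first term is $<0$ by Lemma~\ref{lem:dico}(2) and the second is $>0$ by Lemma~\ref{lem:dico}(3), so the difference is $<0$.

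For item (3), take $j>i+1$; then $j\notin\{i,i+1\}$, hence $\partial_{r_j}\Lambda_i=\partial_{r_j}\lambda_{ij}-\partial_{r_j}\lambda_{i+1,j}$, and since $i<i+1<j$ Lemma~\ref{lem:dico}(4) (with the triple $(i,i+1,j)$ playing the role of $(i,j,k)$) yields $0>\partial_{r_j}\lambda_{ij}>\partial_{r_j}\lambda_{i+1,j}$, whence the difference is $>0$. For item (4), take $j<i$; again $j\notin\{i,i+1\}$, so $\partial_{r_j}\Lambda_i=\partial_{r_j}\lambda_{ij}-\partial_{r_j}\lambda_{i+1,j}$, and since $0<j<i<i+1$ Lemma~\ref{lem:dico}(5) (with $(j,i,i+1)$ playing the role of $(k,j,i)$) yields $\partial_{r_j}\lambda_{ij}<\partial_{r_j}\lambda_{i+1,j}<0$, whence the difference is $<0$.

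The only point calling for attention is the relabelling of subscripts when invoking items (4) and (5) of Lemma~\ref{lem:dico}, since the ordering of the three indices there is not the one appearing in $\Lambda_i$; once the separability remark is recorded, everything else is immediate, so I do not anticipate a real obstacle.
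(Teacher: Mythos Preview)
Your proof is correct and is exactly the intended argument: the paper states the corollary without proof, treating it as an immediate consequence of Lemma~\ref{lem:dico}, and your separability remark together with the four applications of items (2)--(5) of that lemma fills in precisely the details the paper omits. The relabelling for items (3) and (4) is handled correctly.
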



\begin{lemma}\label{lem:p0}
Let $n\in\{2, 3, 4\}$, $\ell \in \N_{>0}$ and $(m_0,m)\in \R_{\geq0}\times \R^n_{>0}$. There exists $p \in \mathcal{R}^n$, such that the $n\times \ell + 1$ spiderweb configuration respects

\[
\lambda_1(p) < \ldots < \lambda_n(p).
\]

Furthermore, in the case $n = 4$, we may choose $(p_3,p_4)$ such that

\[
\lambda_3(p_1,r_2,p_3,p_4) < \lambda_4(p_1,r_2,p_3,p_4),
\qquad \forall r_2\in (p_2,p_3].
\]
\end{lemma}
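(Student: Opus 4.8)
The plan is to produce $p$ by spreading the radii very far apart on a geometric scale, taking $p_i=\rho^{\,i}$ for a large parameter $\rho>1$. With such a choice every ratio $p_j/p_i$ with $j\ne i$ tends to $0$ as $\rho\to\infty$, so by Lemma~\ref{lem:MS} ($\phi_1$ is analytic at $0$, with $\phi_1(0)=\ell$ and $\phi_1'$ bounded near $0$) each $\lambda_i$ collapses, after multiplication by $p_i^3$, to a fixed negative constant; since consecutive $\lambda_i$'s then have magnitudes in ratio $\sim\rho^{-3}$ while staying negative, the inner one is automatically the more negative, which is exactly the ordering we want. (We may assume $\ell\ge2$, so that $\zeta_\ell>0$; for $\ell=1$ the configuration is collinear, and the argument below still applies when $m_0>0$.)

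Concretely, put $p=(\rho,\rho^2,\dots,\rho^n)\in\mathcal{R}^n$ and set $g_i(\rho):=p_i^3\lambda_i(p)$. Reading off \eqref{Fij} and using $p_j/p_i=\rho^{\,j-i}\to0$ for $j<i$ and $p_i/p_j=\rho^{\,i-j}\to0$ for $j>i$, the cross terms $y_j\phi_1'(y_j)$ and $x_j^2\phi_1'(x_j)$ vanish in the limit while $\phi_1(y_j)\to\ell$, so
\[
g_i(\rho)\ \longrightarrow\ L_i:=-m_0-\frac{m_i}{2^{3/2}}\zeta_\ell-\ell\sum_{j<i}m_j\ <\ 0\qquad(\rho\to\infty).
\]
Hence, for each $i=1,\dots,n-1$,
\[
\rho^{3(i+1)}\bigl(\lambda_{i+1}(p)-\lambda_i(p)\bigr)=g_{i+1}(\rho)-\rho^3\,g_i(\rho)\ \longrightarrow\ +\infty\qquad(\rho\to\infty),
\]
since $g_{i+1}(\rho)\to L_{i+1}$ is finite whereas $\rho^3 g_i(\rho)\to-\infty$ (because $L_i<0$). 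Thus there is $\rho_0$ such that $\lambda_1(p)<\dots<\lambda_n(p)$ for every $\rho\ge\rho_0$, which proves the first claim (for every $n$, in particular $n\in\{2,3,4\}$); fix such a $\rho$ and the corresponding $p$.

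For the ``furthermore'' part, keep this $p=(p_1,p_2,p_3,p_4)$ and let only $r_2$ vary. With $\Lambda_3=\lambda_3-\lambda_4$ as in Corollary~\ref{cor:dico}, part~(4) of that corollary (with $i=3$, $j=2<3$) gives $\partial_{r_2}\Lambda_3<0$, so $r_2\mapsto\Lambda_3(p_1,r_2,p_3,p_4)$ is strictly decreasing; as $\Lambda_3(p_1,p_2,p_3,p_4)<0$ by the first part, it follows that $\Lambda_3(p_1,r_2,p_3,p_4)<0$ for every $r_2>p_2$, i.e.\ $\lambda_3<\lambda_4$ on $(p_2,p_3)$. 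Moreover, as $r_2\to p_3^-$ the term $\lambda_{32}$ contains $\phi_1(r_2/p_3)\to+\infty$, so $\lambda_3\to-\infty$ while $\lambda_4$ stays finite; the inequality therefore extends to the closed interval $(p_2,p_3]$.

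This is a direct construction rather than a delicate argument, so there is no real obstacle; the one point needing care is the asymptotic bookkeeping — identifying that, among the $n$ summands of $p_i^3\lambda_i$, the diagonal terms $j\in\{0,i\}$ together with the ``absorbed'' inner circles $j<i$ contribute the fixed negative constant $L_i$, whereas every genuinely $\rho$-dependent term vanishes — plus the remark that one $\rho$ works for all the finitely many inequalities at once. The positivity $L_i<0$ is the only place where the hypotheses on $\ell$ and $m_0$ enter.
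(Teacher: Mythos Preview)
Your argument is correct. Setting $p_i=\rho^i$ and computing $p_i^3\lambda_i(p)\to L_i<0$ is a clean way to get the ordering: the key inequality $L_i<0$ holds because $\zeta_\ell>0$ for $\ell\ge2$ (and you rightly flag the degenerate case $\ell=1$, $m_0=0$, where the spiderweb framework itself breaks down). For the ``furthermore'' clause, invoking $\partial_{r_2}\Lambda_3<0$ from Corollary~\ref{cor:dico}(4) and the known sign at $r_2=p_2$ gives the inequality on the stated interval immediately.

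Your route is genuinely different from the paper's. The paper works inductively: it fixes $p_1$, brings in each new circle from $r_k=+\infty$ (where $\lambda_k\to0^-$), and then uses Lemma~\ref{lem:dico}(4) to argue that decreasing $r_k$ preserves all the \emph{previously established} inequalities $\lambda_1<\dots<\lambda_{k-1}$, so one only has to pick $p_k$ large enough to secure $\lambda_{k-1}<\lambda_k$. Your approach is a single asymptotic computation that treats all circles at once and does not need the monotonicity in Lemma~\ref{lem:dico}(4) at all; it is shorter and, as you note, works verbatim for any $n$. The paper's inductive construction, on the other hand, meshes more naturally with the later continuation arguments in Section~\ref{sec:analy1}, where circles are moved one at a time.

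One editorial remark: the paper's own proof and its application in Theorem~\ref{thm:n} actually use the interval $(p_1,p_2]$ rather than the $(p_2,p_3]$ printed in the statement, so the statement appears to contain a typo. Your monotonicity argument (via $\partial_{r_2}\Lambda_3<0$) goes the wrong way for $(p_1,p_2]$; however, your geometric-scaling construction adapts easily, since for $r_2\in[\rho,\rho^2]$ one still has $r_2/p_3,\,r_2/p_4\to0$ uniformly, so the same asymptotics give $\lambda_3<\lambda_4$ on that range as well.
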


\begin{proof}
We start with an initial circle located at $p_1\in \R_{>0}$. For $n=2$, we have $\lambda_1(p_1,+\infty)<\lambda_2(p_1,+\infty)=0$ and from the Point $4$ of Lemma~\ref{lem:dico}, this inequality is preserved when $r_2$ decreases. Repeating this exact argument for $n=3,4$ gives the expected result. Moreover, in the case $n=4$, the radius $p_4$ can be taken sufficiently large so that $\lambda_3(p_1,r_{2},p_3,p_4) < \lambda_4(p_1,r_{2},p_3,p_4)$ for all $r_2\in (p_1,p_2]$.
\end{proof}

\subsection{Equations for spiderweb central configurations}

We have seen in Section \ref{sec:scalings} that the common $\lambda$ characterizing a central configuration may be fixed to any real strictly negative number, independently of $n,\ell $ and the mass distribution $(m_0,m) $.

Hence, depending of the method we will use, a spiderweb central configuration can be seen as a solution of $\Lambda=0$ or as a zero of the map $f : \R^{n} \longrightarrow \R^n$ given by

\begin{align}\label{fiphi}
&f_i(r_i,y_1,\dots,y_{i-1},x_{i+1},\dots,x_n)\nonumber\\
&= \lambda \, r_i  -  \frac{F_i(r_i,y_1,\dots,y_{i-1},x_{i+1},\dots,x_n)}{m_i}\nonumber\\
&=
\lambda \, r_i + \frac{ m_i }{2^{3/2}r_i^2}\zeta_{\ell} +\frac{ m_0 }{r_i^2} + \sum_{j=1}^{i-1} \frac{m_j}{r_i^2}\left(\phi_1(y_j)+y_j\phi_1'(y_j)\right) - \sum_{j=i+1}^n \frac{m_j x_j^2}{r_i^2} \phi_1'(x_j).
\end{align}

\section{Existence of spiderweb central configurations with arbitrary $n$ and $\ell$}\label{sec:general}
In this section we give a very short proof of the theorem announced in \cite{CDL}. This requires introducing the tool of restricted spiderweb central configurations, which will be used also later in the paper.

\subsection{Restricted spiderweb central configurations}

\begin{theorem}\label{thm:restreint}
Let $n\in \N$, $\ell \in \N_{\geq2}$ and $(m_0,m)\in \R_{\geq0}\times \R_{>0}^n$. Suppose there exists some radii $r\in \mathcal{R}^{n}$ giving a $n \times \ell +1$ spiderweb central configuration. Then, for any $i\in \{ 0,\dots,n \}$, if $m_{n+1}=0$, there exists a unique position

\[
r_{n+1} \in
\begin{cases}
(r_{i},r_{i+1}), & \text{if }i = 0,\dots,n-1,\\
(r_{n},+\infty), & \text{if } i =n,
\end{cases}
\]

\noindent giving a $(n+1)\times \ell+1$ spiderweb central configuration.
\end{theorem}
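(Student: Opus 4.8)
The plan is to exploit that a massless circle exerts no gravitational force. Since $m_{n+1}=0$, the $n$ given circles still satisfy $\lambda_1(r)=\dots=\lambda_n(r)=\lambda$, where $\lambda<0$ is the common (fixed, strictly negative) multiplier of the given configuration. Consequently, adjoining an $(n{+}1)$-st circle of radius $\rho:=r_{n+1}$ produces a $(n+1)\times\ell+1$ central configuration if and only if the new, massless body has the \emph{same} multiplier, i.e.
\[
g(\rho):=\lambda_{n+1}(r_1,\dots,r_n;\rho)=\lambda .
\]
Here $\lambda_{n+1}=F_{n+1}/(m_{n+1}\rho)$ is a well-defined smooth function of $\rho$ on each admissible interval: the factor $1/m_{n+1}$ cancels in $F_{n+1}/m_{n+1}$, and the self-term $-\tfrac{m_{n+1}}{2^{3/2}\rho^2}\zeta_\ell$ of $F_{n+1}$ vanishes. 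By \eqref{Fij},
\[
g(\rho)=-\frac{m_0}{\rho^{3}}-\sum_{j:\,r_j<\rho}\frac{m_j}{\rho^{3}}\bigl(\phi_1(y_j)+y_j\phi_1'(y_j)\bigr)+\sum_{j:\,r_j>\rho}\frac{m_j}{\rho^{3}}\,x_j^{2}\,\phi_1'(x_j),\qquad y_j=\tfrac{r_j}{\rho},\ x_j=\tfrac{\rho}{r_j}.
\]
I would then show that $g$ is a strictly increasing homeomorphism of the relevant interval $J_i$ (namely $(r_i,r_{i+1})$, resp.\ $(r_n,+\infty)$) onto an open interval containing $\lambda$; the intermediate value theorem then yields existence and strict monotonicity yields uniqueness.

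The monotonicity $g'(\rho)>0$ on $J_i$ comes termwise: $-m_0/\rho^3$ is increasing; for $r_j<\rho$ one gets $\partial_\rho\lambda_{n+1,j}>0$ since $\partial_y\bigl(\phi_1(y)+y\phi_1'(y)\bigr)=2\phi_1'(y)+y\phi_1''(y)>0$ by Lemma~\ref{lem:MS}; and for $r_j>\rho$ one gets $\partial_\rho\lambda_{n+1,j}=\frac{m_j}{r_j^{4}}\,\frac{x_j\phi_1''(x_j)-\phi_1'(x_j)}{x_j^{2}}>0$ because $x\phi_1''(x)-\phi_1'(x)=\sum_{p\ge2}a_p\,p(p-2)x^{p-1}>0$, using $\phi_1(x)=\sum a_px^p$ with $a_p\ge0$ and $a_1=\phi_1'(0)=\sum_k\cos\theta_k=0$. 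These are exactly the computations already carried out in the proof of Lemma~\ref{lem:dico}(3).

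For the endpoint behaviour one uses $d_0(x)=|1-x|$, so that $\phi_1(x),\phi_1'(x)\to+\infty$ as $x\to1^-$. If $1\le i\le n-1$: as $\rho\to r_i^{+}$ the summand $-\frac{m_i}{\rho^3}\bigl(\phi_1(y_i)+y_i\phi_1'(y_i)\bigr)$ with $y_i\to1^-$ drives $g(\rho)\to-\infty$, while as $\rho\to r_{i+1}^{-}$ the summand $\frac{m_{i+1}}{\rho^3}x_{i+1}^2\phi_1'(x_{i+1})$ with $x_{i+1}\to1^-$ drives $g(\rho)\to+\infty$; hence $g(J_i)=\R\ni\lambda$. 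For $i=n$: the lower end again gives $g\to-\infty$, while as $\rho\to+\infty$ each $\phi_1(y_j)+y_j\phi_1'(y_j)\to\phi_1(0)=\ell$, so $\rho^3g(\rho)\to-(m_0+\ell\sum_jm_j)<0$ and $g(\rho)\to0^{-}$; thus $g((r_n,\infty))=(-\infty,0)\ni\lambda$. For $i=0$ (interval $(0,r_1)$, with $m_0>0$): the right end gives $g\to+\infty$ as above, and $-m_0/\rho^3\to-\infty$ as $\rho\to0^{+}$ gives $g\to-\infty$, so $g((0,r_1))=\R\ni\lambda$. In each admissible case $g$ is continuous, strictly increasing, and attains $\lambda$ exactly once; that value is the sought $r_{n+1}$.

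I expect the main obstacle to be the endpoint analysis rather than the monotonicity (which merely reuses Lemma~\ref{lem:dico}): one must isolate the divergence of $\phi_1,\phi_1'$ at $x=1$ coming from the single index $k=0$ and check it enters $g$ with the correct sign in every slot, and for the unbounded slot $i=n$ one must pin down the decay rate precisely enough to conclude $g\to0^{-}$ (not merely $g\to0$), since that is what guarantees $\lambda$ lies in the range. One should also note that when $m_0=0$ the slot $i=0$ behaves differently — there $g>0$ throughout $(0,r_1)$ — so that case of the statement is to be read under the assumption $m_0>0$.
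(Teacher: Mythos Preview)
Your argument is correct and mirrors the paper's proof: since $m_{n+1}=0$ the original $n$ circles keep $\lambda_1=\dots=\lambda_n=\lambda$, and one then solves $\lambda_{n+1}(\rho)=\lambda$ using the strict monotonicity of $\rho\mapsto\lambda_{n+1}$ from Lemma~\ref{lem:dico}(3) together with the endpoint limits $-\infty$, $+\infty$ (resp.\ $0^{-}$ for $i=n$); the paper simply cites Lemma~\ref{lem:dico} for these monotone limits rather than re-deriving them as you do. Your caveat about the slot $i=0$ when $m_0=0$ is a valid point the paper does not flag --- indeed $g>0$ on $(0,r_1)$ there and $\lambda<0$ is not attained --- though in the sequel the theorem is only invoked with $i=n$, so the gap is harmless for the paper's applications.
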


\begin{proof}
By hypothesis, there exists $r\in \mathcal{R}^{n}$ such that the $n\times \ell +1$ spiderweb configuration is central.

Fix $i\in \{ 0,\dots,n \}$ and add a $(n+1)^{th}$ circle of zero mass and of radius

\[
r_{n+1} \in
\begin{cases}
(r_{i},r_{i+1})\,, & \text{if } i<n,\\
(r_{n},+\infty)\,, & \text{if } i=n,
\end{cases}
\]

Consider

\[
\lambda_{j}(r,r_{n+1})= \frac{F_{j}(r,r_{n+1})}{m_{j} r_{j}}\,,\quad j=1,\dots,n+1,
\]

\noindent which, by \eqref{Fij}, is perfectly defined when $m_{n+1}=0$.

On the one hand, adding particles of negligible mass bears no effect on the gravitational force felt by the particles on the $n$ initial circle, that is $\lambda_1=\ldots=\lambda_{n}=\lambda<0$.

On the other hand, Lemma~\ref{lem:dico} gives the monotonous limits

\[
\quad \lim_{r_{n+1} \, \searrow \, r_{i}} \lambda_{n+1}(r,r_{n+1}) = -\infty,\quad
\begin{cases}
\lim_{r_{n+1} \, \nearrow \, r_{i+1}} \lambda_{n+1}(r,r_{n+1}) = +\infty, & \text{if } i<n+1,\\
\lim_{r_{n+1} \rightarrow +\infty} \lambda_{n+1}(r,r_{n+1}) = 0^-, & \text{if } i=n,
\end{cases}
\]

Consequently, there is a unique $r_{n+1}$ in each case such that $\lambda_{n+1}=\lambda$.
\end{proof}

\subsection{Proof of the existence of central configurations}

\begin{theorem}\label{thm:CDL}
Let $n \in \N$, $\ell \in \N_{\geq2}$ and $(m_0,m_1)\in \R_{\geq0}\times\R_{>0}$. There exists $r\in \mathcal{R}^{n}$ and masses $m_n \ll \ldots \ll m_2\ll m_1$ giving  a $n\times \ell+1$  spiderweb central configuration. \end{theorem}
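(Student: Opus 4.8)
The plan is to prove the theorem by induction on $n$, using Theorem~\ref{thm:restreint} as the base for adding a new circle at zero mass, and then continuing the configuration as $m_{n+1}$ grows via the implicit function theorem. For $n=1$ the statement is trivial: a single circle of $\ell$ equal masses $m_1>0$ together with $m_0\geq 0$ is always a spiderweb central configuration for a suitable radius $r_1$, since the equation $f_1(r_1)=0$ in \eqref{fiphi} reduces (with no inner or outer circles) to $\lambda r_1 + m_1 \zeta_\ell/(2^{3/2}r_1^2) + m_0/r_1^2 = 0$, which has a unique positive root because the left-hand side runs monotonically from $+\infty$ to $-\infty$ on $(0,+\infty)$.

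Assume now we have radii $r\in\mathcal{R}^n$ and masses $m_n\ll\cdots\ll m_1$ giving an $n\times\ell+1$ central configuration. By Theorem~\ref{thm:restreint} (applied with $i=n$), setting $m_{n+1}=0$ produces a unique $r_{n+1}\in(r_n,+\infty)$ making the $(n+1)\times\ell+1$ configuration central. I would then consider the map $f=(f_1,\dots,f_{n+1})$ of \eqref{fiphi} with $\lambda=-1$ fixed, viewed as a function of the $n+1$ radii and of the parameter $m_{n+1}\geq 0$, with all other masses frozen. At the point just constructed we have $f(r,r_{n+1},0)=0$, and the goal is to apply the implicit function theorem to solve for the radii as a function of $m_{n+1}$ in a neighborhood of $m_{n+1}=0$, then push $m_{n+1}$ up to any desired (small) positive value; choosing it small enough preserves $m_{n+1}\ll m_n$, completing the induction step. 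Equivalently one may work with the map $\Lambda$ of Corollary~\ref{cor:dico} together with the single scalar equation fixing $\lambda$; I expect the Jacobian analysis to be cleanest in whichever of these two formulations makes the sign structure most transparent.

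The main obstacle — and the gap that the introduction explicitly flags in \cite{CDL} — is verifying that the Jacobian $\partial f/\partial(r_1,\dots,r_{n+1})$ is invertible at the zero-mass configuration. The plan is to exploit the monotonicity properties catalogued in Lemma~\ref{lem:dico}: Point~3 gives $\partial_{r_i}\lambda_i>0$ and Point~2 gives $\partial_{r_j}\lambda_i<0$ for $i\neq j$, so (after the sign normalization coming from $f_i=\lambda r_i - F_i/m_i$) the Jacobian has a dominant-diagonal sign pattern. The key quantitative input is that when $m_{n+1}=0$ the new circle exerts \emph{no} force on the first $n$ circles, so the last column of the relevant submatrix is essentially decoupled: the $(n+1)\times(n+1)$ Jacobian is block lower-triangular, its top-left $n\times n$ block being the Jacobian of the original $n$-circle problem (invertible by the induction hypothesis, or directly by a diagonal-dominance argument), and its bottom-right entry being $\partial_{r_{n+1}}\lambda_{n+1}$, which is strictly positive by Point~3 of Lemma~\ref{lem:dico}. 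Hence the determinant is nonzero. I would state and prove the diagonal-dominance/invertibility claim as the crux, then let the implicit function theorem and a continuity/compactness argument (to make sure the continued radii stay in $\mathcal{R}^{n+1}$ and the ordering $r_i<r_{i+1}$ is not violated for small $m_{n+1}$) finish the argument.
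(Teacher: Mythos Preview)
Your approach is exactly the paper's: induction on $n$, add the new circle at zero mass via Theorem~\ref{thm:restreint}, observe that at $m_{n+1}=0$ the Jacobian of $f$ is block lower-triangular (the massless circle exerts no force on the others, so the last column above the diagonal vanishes), and apply the implicit function theorem. One correction: you must carry the invertibility of the top-left $n\times n$ block as part of the induction hypothesis (this is what the paper does), because your proposed alternative of a ``diagonal-dominance argument'' is \emph{not} available for general $\ell$---Section~\ref{sec:analy2} shows that $h_\ell$ becomes negative on part of $(0,1)$ for $\ell\geq 10$, so $D_r f$ fails to be diagonally dominant. With invertibility in the hypothesis, the determinant factors as the old Jacobian times $\partial_{r_{n+1}}f_{n+1}<0$ (Lemma~\ref{lem:MS}), and continuity keeps the full Jacobian invertible for small $m_{n+1}>0$, closing the induction.
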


\begin{proof}
The proof is by induction on $n$. Let $m_0\in \R_{\geq0}$  and $\lambda <0$. If $n=1$,  for any $m_1 \in \R_{>0}$, according to equation \eqref{fiphi}, there exists a unique zero $f(r_1)=f_1(r_1)=0$ and the derivative never vanishes according to Lemma~\ref{lem:MS}. 

Let $n\geq 2$, and suppose that the jacobian $|D_{(r_1,\dots,r_{n-1})} f|$ is invertible for $n-1$ circles with $m_1 \gg \ldots \gg m_{n-1}$. We place a $n^{th}$ circle with $m_n=0$ and, by theorem \ref{thm:restreint}, there exists a unique $r(0) = (r_1,\dots,r_{n-1},r_n) \in \mathcal{R}^{n}$ giving a spiderweb central configuration.

Using the notation $\partial_k = \frac{\partial}{\partial r_k}$, we have

\begin{equation}\label{jacobien_DDS}
\begin{split}
\partial_i f_i &=\\
&{\tiny \lambda - \frac{ m_i }{r_i^3\sqrt{2}}\zeta_{\ell} - \frac{ 2m_0 }{r_i^3} - \sum_{j=1}^{i-1} \frac{m_j}{r_i^3}\left(2\phi_1(y_j)+4y_j\phi_1'(y_j)+y_j^2\phi_1''(y_j)\right)-\sum_{j=i+1}^n\frac{m_j x_j^3}{r_i^3}\phi''_1(x_j)},\\
\partial_j f_i &=
\begin{cases}
\displaystyle \frac{m_j}{r_i^3}\left( 2\phi'_1(y_j)+y_j\phi''_1(y_j)\right), & j<i,\\
\displaystyle \frac{m_j x_j^3}{r_i^3}\left( 2\phi'_1(x_j)+x_j\phi''_1(x_j)\right), & j>i.
\end{cases}
\end{split}
\end{equation}

Therefore,

\[
|D_r f(r(0))|=
\left|\left(
\begin{array}{c c c ;{2pt/2pt} c}
&  & & 0\\
& D_{(r_1,\dots,r_{n-1})} f(r(0)) & & \vdots\\
& & & 0\\
& & & \\
\hdashline[2pt/2pt]
& & &\\
\partial_{1}f_n(r(0)) & \dots & \partial_{n-1}f_n(r(0))  & \partial_{n}f_n(r(0))
\end{array}\right)\right|\neq 0,
\]

\noindent because $\partial_{n}f_n(r(0))<0$ by Lemma~\ref{lem:MS} and $|D_{(r_1,\dots,r_{n-1})} f(r(0))|\neq 0$ by hypothesis.

The \emph{implicit function theorem} yields a neighborhood $V$ of $m_n=0$ such that the function $r=r(m_n)$ is a zero of $f$ for all $m_n \in V$. So, the condition $m_n \ll m_{n-1}$ ensures the existence of the spiderweb central configuration.
\end{proof}

\section{Existence and uniqueness for circles of low density ($\ell$ small)}\label{sec:analy2}

Recall the map $f$ given in \eqref{fiphi}, whose zeros give a spiderweb central configuration, and its Jacobian matrix $D_r f$ given in  \eqref{jacobien_DDS}. In Theorem~\ref{thm:CDL}, the existence of a spiderweb central configuration is asserted under the condition $m_1\gg \ldots \gg m_n$. However, for a system with circles of low density, namely small values of $\ell$, the \emph{implicit function theorem} may be used to extend the zeros of $f$ for all positive value of the mass $m_n$ on the outermost circle. In such cases, iterating the argument allows us to construct a spiderweb central configuration for an arbitrary number $n$ of circles and, moreover, to prove its uniqueness.

\begin{theorem}\label{thm:ell}
Let $n \in \N$, $\ell \in \{2,\dots,9\}$ and $(m_0,m) \in\R_{\geq0} \times \R^n_{>0}$. For a fix $\lambda$, there exists a unique $r\in \mathcal{R}^{n}$ such that the $n\times \ell+1$ spiderweb configuration is central.

Furthermore, if $m_1\geq\ldots\geq m_n$, then the $n\times \ell+1$ spiderweb central configuration exists and is unique in the following cases:
\begin{enumerate}
    \item $\ell=10$ and $n\leq 17$.
    \item $\ell=11$ and $n\leq 9$.
    \item $\ell=12$ and $n\leq 6$.
    \item $\ell=13$ and $n\leq 5$.
    \item $\ell=14,15$ and $n\leq 4$.
    \item $\ell=16,17,18$ and $n\leq 3$.
\end{enumerate}
\end{theorem}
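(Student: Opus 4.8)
The plan is to prove existence and uniqueness by induction on $n$, following the strategy sketched after Theorem~\ref{thm:CDL}: start with a single circle, where \eqref{fiphi} has a unique zero with non-vanishing derivative, and then add circles one at a time, using Theorem~\ref{thm:restreint} to produce the starting point $r(0)$ at $m_{n+1}=0$ and the implicit function theorem to continue the solution as $m_{n+1}$ increases from $0$ to its prescribed value. For this continuation to work globally (not just locally near $m_{n+1}=0$), two things must be controlled along the whole path: first, the Jacobian $|D_r f|$ must stay invertible, so that the implicit function theorem applies at every point; second, the radius $r_{n+1}$ must not escape to the boundary of $\mathcal{R}^{n+1}$ (i.e. collide with $r_n$ or run off to $+\infty$), so that the solution curve does not leave the domain before $m_{n+1}$ reaches its target. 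Uniqueness will follow from the same Jacobian-invertibility argument combined with a monotonicity/connectedness argument showing the solution set is a single arc.

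The crucial step is showing $|D_r f| \neq 0$ throughout. The natural approach is to prove that $D_r f$ is diagonally dominant, or more precisely that some weighted version of it is: from \eqref{jacobien_DDS}, the diagonal entry $\partial_i f_i$ contains the term $\lambda<0$ plus strictly negative curvature contributions, while the off-diagonal entries $\partial_j f_i$ are built from $2\phi_1'+x\phi_1''$ type expressions. Using Lemma~\ref{lem:MS} — all Taylor coefficients $a_n$ of $\phi_1$ are nonnegative and $a_1=0$ — one can get explicit bounds on $\phi_1'(x)$ and $\phi_1''(x)$ on $(0,1)$ in terms of $\ell$, and hence estimate the ratio (off-diagonal sum)/(diagonal entry). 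The point of the restriction $\ell\le 9$ (and the weaker ranges for $10\le\ell\le 18$ under $m_1\ge\cdots\ge m_n$) is presumably that these estimates yield strict diagonal dominance — equivalently that the relevant comparison $\sum_{k=1}^{\ell-1}(1-\cos\theta_k)^{-1/2} = \zeta_\ell$ grows fast enough relative to the inter-circle coupling when $\ell$ is small, but the coupling overwhelms it once $\ell$ is large unless the masses decrease outward. I would carve out the precise inequality on $\phi_1$, $\phi_1'$, $\phi_1''$ that makes $D_r f$ (column- or row-) diagonally dominant, verify it holds exactly for $\ell\in\{2,\dots,9\}$ unconditionally, and for the listed $(\ell,n)$ pairs when the masses are ordered, using that $m_j/m_i\le 1$ for $j\ge i$ to absorb the mass ratios.

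For the a priori bound keeping $r_{n+1}$ inside $\mathcal{R}^{n+1}$, I would argue as in Lemma~\ref{lem:p0} and Theorem~\ref{thm:restreint}: the equations $\Lambda=0$ (differences of consecutive $\lambda_i$) together with the monotonicity properties of Corollary~\ref{cor:dico} force the radii to stay in a compact sub-box of $\mathcal{R}^{n+1}$ bounded away from the walls $r_j=r_{j+1}$, because as any gap shrinks the corresponding $\lambda_{j,j+1}$ blows up. Concretely, for fixed $\lambda$ and fixed masses, the zero set of $f$ is contained in a compact subset of $\mathcal{R}^n$; combined with Jacobian-invertibility this makes $r\mapsto(m)$ a proper local diffeomorphism, hence (the domain being simply connected in the relevant sense) a global one, giving existence of exactly one preimage.

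The main obstacle I expect is the Jacobian estimate. Establishing strict diagonal dominance of $D_r f$ requires sharp two-sided control of $\phi_1'$ and $\phi_1''$ uniformly over $x\in(0,1)$ and over the discrete parameter $\ell$, and the transition at $\ell\approx 9$–$18$ suggests the needed inequality is tight and must be checked with some care — quite possibly case-by-case in $\ell$, bounding $\phi_1^{(k)}(x)$ via its series $\sum a_n n(n-1)\cdots x^{n-k}$ and comparing with $\zeta_\ell$. The monotone-in-$m_1\ge\cdots\ge m_n$ refinement then amounts to checking that replacing each mass ratio by its worst case $1$ still leaves the dominance inequality valid precisely for the tabulated ranges of $n$. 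Everything else — the inductive skeleton, the implicit function theorem step, the boundary exclusion — is routine given the lemmas already proved.
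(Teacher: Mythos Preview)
Your approach is essentially the paper's: induction on $n$, Theorem~\ref{thm:restreint} to produce the starting point at $m_n=0$, strict row-diagonal dominance of $D_r f$ to keep the Jacobian invertible throughout, and a compactness argument (the paper's Claims~2 and~3, matching your boundary-exclusion sketch) so the implicit-function branch continues for all $m_n\ge 0$, with global uniqueness coming from uniqueness at $m_n=0$. The one tactical refinement to note is that the dominance condition collapses to positivity on $(0,1)$ of the explicit function
\[
h_\ell(x)=\phi_1''(x)(1-x)-2\phi_1'(x)=\sum_{k=1}^{\ell-1}\frac{(1-\cos\theta_k)\bigl(2x^2+x(3-\cos\theta_k)-1-3\cos\theta_k\bigr)}{(1+x^2-2x\cos\theta_k)^{5/2}},
\]
which the paper verifies by hand for $\ell\le 4$ and by an interval-arithmetic grid evaluation for $\ell=5,\dots,9$ (and then, for $10\le\ell\le18$, bounds $\min_{[0,1]}h_\ell$ numerically and plays it against $\zeta_\ell$ using $m_j\le m_i$ for $j>i$); your proposed route via Taylor-series bounds on $\phi_1^{(k)}$ would be harder to execute since the coefficients $a_n$ depend on $\ell$ in a nontrivial way.
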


\begin{proof}
For $n=1$, we have a regular $\ell$-gon with a central mass. The equation \eqref{fiphi} shows the existence of a unique $r_1 \in \R_{>0}$ such that the configuration is central.

Suppose for $n-1$ circles, with $(m_1,\dots,m_{n-1}) \in \R^{n-1}_{>0}$, there exists a unique $(r_1,\dots,r_{n-1})\in \mathcal{R}^{n-1}$ such that the $(n-1) \times \ell + 1$ mass-particles form a spiderweb central configuration for $\lambda$. By Theorem~\ref{thm:restreint}, there exists a unique $r(0)=(r_1,\dots,r_{n-1},r_n) \in \mathcal{R}^{n}$ such that the $n \times \ell + 1$ spiderweb configuration is central for this $\lambda$.

We will prove the following:

\begin{enumerate}
    \item[$\mathit{1.}$] \textit{The jacobian matrix $D_r f \in M_n(\R)$, whose entries are given by \eqref{jacobien_DDS}, is invertible for all $r \in \mathcal{R}^{n}$ and $m_n \in \R_{\geq0}$.}
    
    \item[$\mathit{2.}$] \textit{All the radii remain bounded for all $m_n\in \R_{\geq0}$.}
    
    \item[$\mathit{3.}$] \textit{All the radii remain distinct for all $m_n\in \R_{\geq0}$.}
 
     \end{enumerate}

The first claim allows using the \emph{implicit function theorem} to obtain a function $r(m_n)$ such that $f(r(m_n),m_n)=0$. Claims 2 and 3  allow concluding  that $r(m_n)$ can be uniquely extended for any value of $m_n \in \R_{\geq0}$, yielding its local uniqueness. The global uniqueness follows from the following argument: suppose there is an other function $\psi(m_n)$ such that $f(\psi(m_n),m_n)=0$, then it can be extended on $\R_{\geq0}$. In particular, $\psi(0)=r(0)$ because $r(0)$ is unique. Hence, $\psi(m_n) = r(m_n)$ for every $m_n\in \R_{\geq0}$.

\bigskip

\begin{itemize}
    \item[\textbf{Claim} $1$:]
    
Recall that a sufficient criterion for a matrix to be invertible is to be \emph{strictly diagonally dominant}\footnote{i.e. $|M_{ii}|> |\sum_{j\neq i} M_{ij} |$ for $i=1,\dots,n$ and $M \in M_n(\R)$.}. We know that $\zeta_{\ell}$ is strictly positive and, by lemma \ref{lem:MS}, that $\partial_i f_i<0$ and $\partial_j f_i>0$ for $j\neq i$. Hence, we must show

\[
-\partial_i f_i - \sum_{\begin{smallmatrix}j=1\\j\neq i \end{smallmatrix}}^n \partial_j f_i>0, \quad i=1,\dots,n.
\]

Now, if we rewrite $f_i$ in terms of $x_j$ and $\phi$ defined respectively in \eqref{def:x} and \eqref{def:phi}

\begin{equation*}
- \partial_i f_i - \sum_{\begin{smallmatrix}j=1\\j\neq i \end{smallmatrix}}^n \partial_j f_i
=-\lambda +\frac{ m_i }{r_i^3\sqrt{2}}\zeta_{\ell} +\frac{2m_0 }{r_i^3}
+ \sum_{\begin{smallmatrix}j=1\\j\neq i \end{smallmatrix}}^n \frac{m_j x_j^3}{r_i^3}\left(\phi''_1(x_j)(1-x_j)-2\phi'_1(x_j)\right) .
\end{equation*}

But,

\begin{align*}
\phi_1'(x_j)
&= \frac{d}{d x_j}\left( \sum_{k=0}^{\ell-1} \frac{1}{\left(1+x_j^2-2x_j\cos \theta_k\right)^{1/2}}\right)
= - \sum_{k=0}^{\ell-1} \frac{x_j-\cos \theta_k}{\left(1+x_j^2-2x_j\cos \theta_k\right)^{3/2}}\,,\\
\phi''_1(x_j)
&=\frac{d}{d x_j}\left(- \sum_{k=0}^{\ell-1} \frac{x_j-\cos \theta_k}{\left(1+x_j^2-2x_j\cos \theta_k\right)^{3/2}}\right)
=-\sum_{k=0}^{\ell-1} \frac{1+x_j^2-2x_j \cos \theta_k -3(x_j-\cos \theta_k)^2}{\left(1+x_j^2-2x_j\cos \theta_k\right)^{5/2}},
\end{align*}

\noindent whence


\[
\phi''_1(x_j)(1-x_j)-2\phi'_1(x_j)=\sum_{k=0}^{\ell-1} \frac{(1-\cos \theta_k)(2x_j^2+x_j(3-\cos \theta_k)-1-3\cos \theta_k)}{\left(1+x_j^2-2x_j\cos \theta_k\right)^{5/2}}.
\]

Noticing that the term for $k=0$ is zero, we find

\begin{equation}\label{eqDDS}
- \partial_i f_i - \sum_{\begin{smallmatrix}j=1\\j\neq i \end{smallmatrix}}^n \partial_j f_i
=-\lambda +\frac{ m_i }{r_i^3\sqrt{2}}\zeta_{\ell} +\frac{ 2m_0 }{r_i^3}
+ \sum_{\begin{smallmatrix}j=1\\j\neq i \end{smallmatrix}}^n \frac{m_j x_j^3}{r_i^3} h_\ell(x_j) ,
\end{equation}

\noindent with

\[
h_\ell(x_j)=
\sum_{k=1}^{\ell-1} \frac{(1-\cos \theta_k)(2x_j^2+x_j(3-\cos \theta_k)-1-3\cos \theta_k)}{\left(1+x_j^2-2x_j\cos \theta_k\right)^{5/2}}.
\]

Since $\lambda <0$ and $\zeta_{\ell}>0$, the expression given in \eqref{eqDDS} is strictly positive if $h_\ell(x_j)$ is positive, where the sign of the latter depends on the sign of the polynomial $2x^2+x(3-\cos \theta_k)-1-3\cos \theta_k$.

It is sufficient to show that the sign of $h_\ell(x_j)$ is strictly positive in the case $x_j\in (0,1)$, that is when $j>i$. Indeed, for $x>1$, we have

\[
2x^2+x(3-\cos \theta_k)-1-3\cos \theta_k> 2(x^2+x)-4 >0.
\]

\begin{figure}[ht]
\centering
\subfigure[][]{
\includegraphics[height=2.3in]{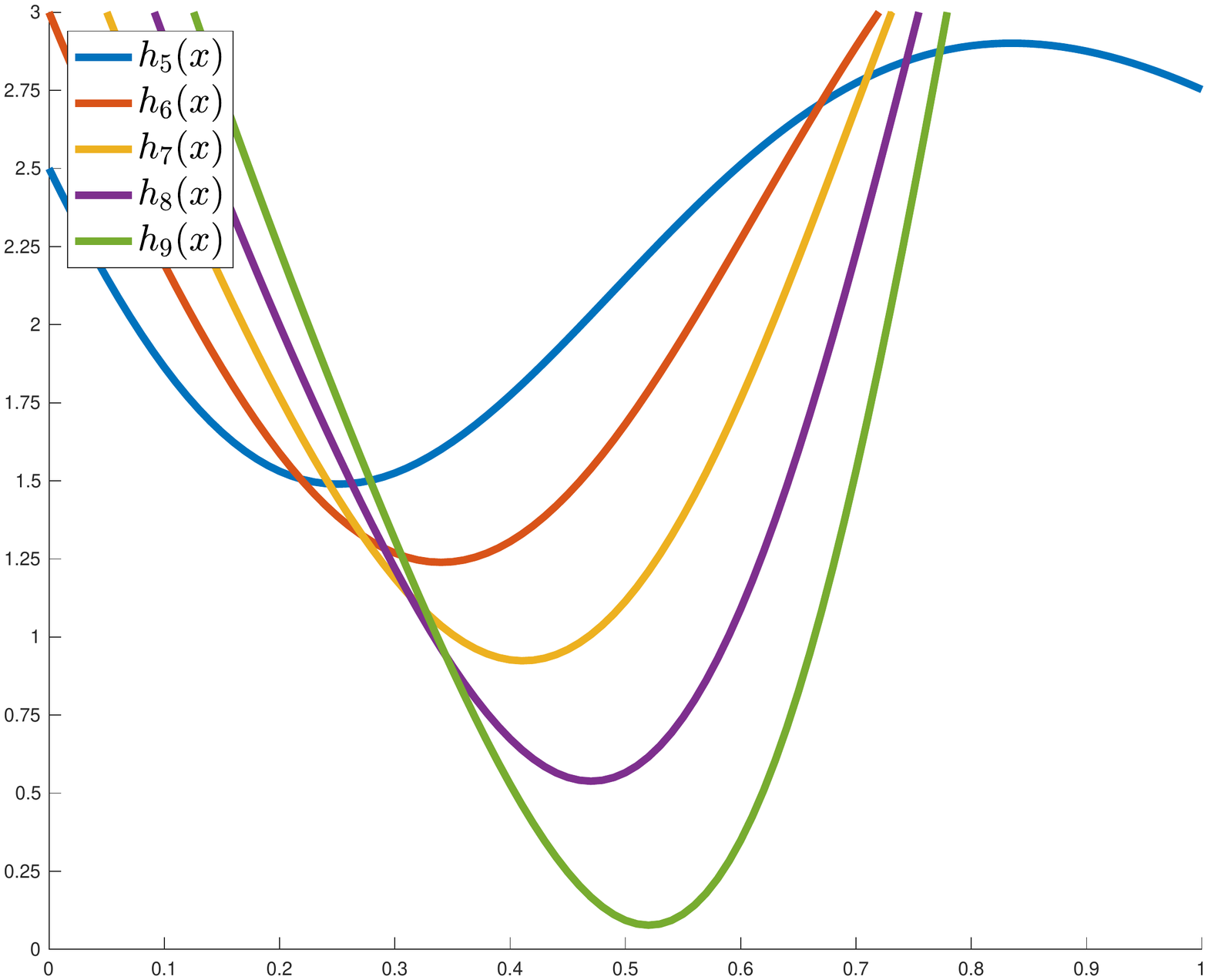}\label{fig:h06_08}}
\hspace{8pt}
\subfigure[][]{
\includegraphics[height=2.3in]{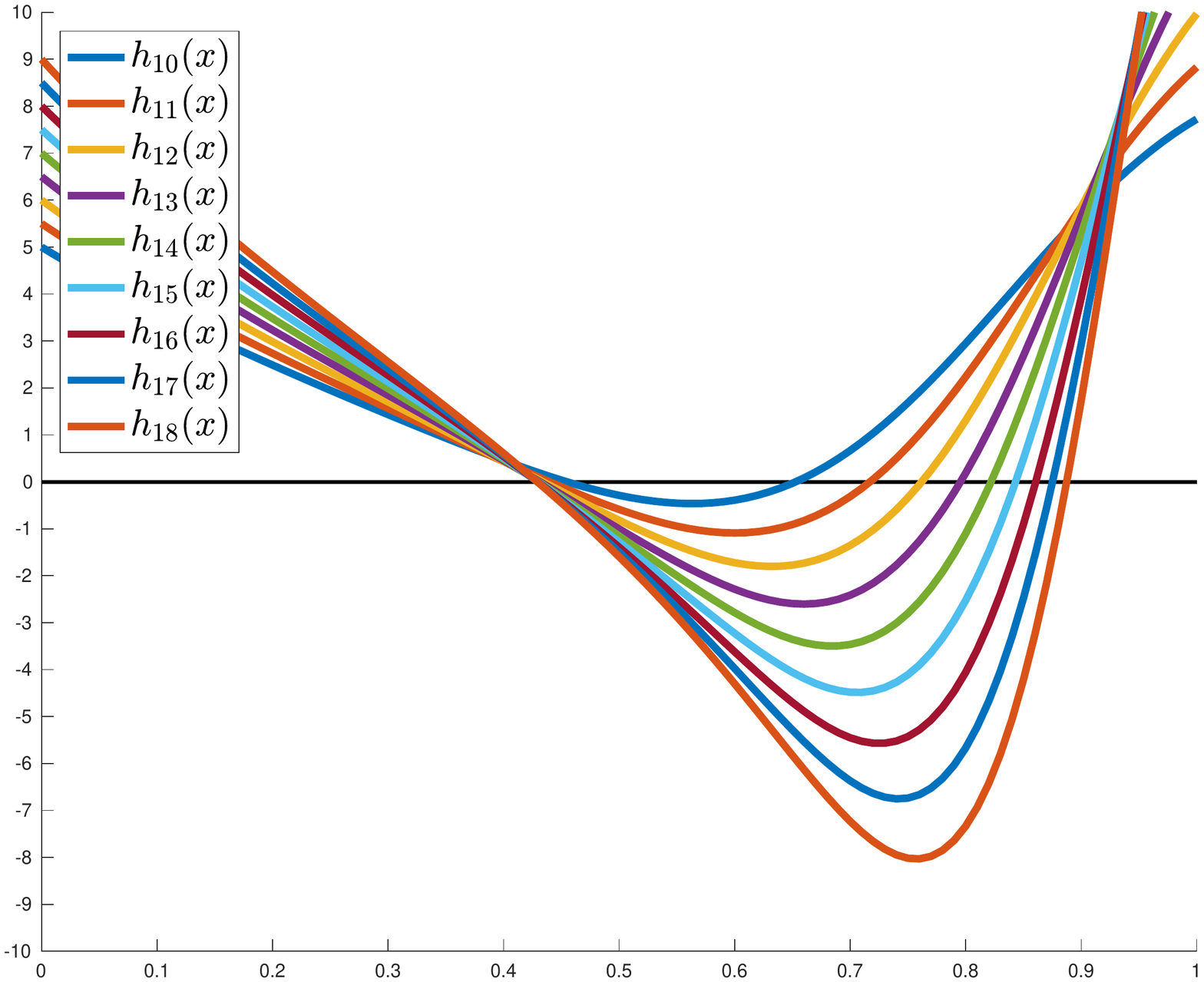}
\label{fig:h10_18}}
\caption{Graph of $h_\ell(x)$ in the interval $[0,1]$ for
\subref{fig:h06_08} $\ell=5,\dots,9$ and
\subref{fig:h10_18} $\ell=10,\dots,18$.}
\label{fig:h}
\end{figure}

\subsubsection*{Cases $\ell=2,3,4$ }

For $\ell=2,3$, we have

\[
h_\ell(x_j) =
\begin{cases}
\displaystyle \frac{4}{(1+x_j)^3}, & \ell=2,\\
\displaystyle \frac{3(\frac{1}{2}+\frac{7}{2}x+2 x^2)}{(1+x_j+x_j^2)^{5/2}}, & \ell=3,
\end{cases}
\]

\noindent which are clearly strictly positive function for all $x_j\in (0,1)$.
    
For $\ell=4$, we have

\[
h_4(x_j) = \frac{4}{(1+x_j)^3}+2\frac{2x_j^2+3x_j-1}{(1+x_j^2)^{5/2}}.
\]

It is sufficient to show that $v(x) = (1+x)^3h_4(x)$ is strictly positive for $x\in (0,1)$. The derivative is $v'(x)=6 x (1 + x)^2 (7 - 3 x^2)(1 + x^2)^{-7/2}$ whose only positive root is strictly greater than $1$. Since $v(0)>0$, it follows that $v(x)>0$ for all $x\in (0,1)$.

\subsubsection*{Cases $\ell=5,\dots,9$}

To avoid fastidious work, we give a simple computer-assisted proof for these cases.

Since $h_\ell \in C^\infty([0,1])$, there is some $M>0$ such that

\[
|h_\ell'(x)| <M, \qquad \forall x \in [0,1].
\]

Let $\bigcup_{q=0}^p [s_q,s_{q+1}]=[0,1]$ with $s_q=q/p$. For any $s \in [s_q,s_{q+1}]$, we have $|h_\ell(s)-h_\ell(s_{q+1})|<M/p\Rightarrow h_\ell(s) >-M/p+h_\ell(s_{q+1})$.

Graphically, we estimate a lower bound $m>0$ for $h_\ell(x)$ on the interval $[0,1]$. We then choose $p$ so that $M/p<m$. Using the interval arithmetic \cite{Ru99a}, we can rigorously check that $h_\ell(s_{q+1})$, for $q=0,\dots,p-1$, is strictly greater than $m$. Consequently, we get $ h_\ell(s) >-M/p+h_\ell(s_{q+1})>0$ for all $s \in [s_q,s_{q+1}]$ and $q=0,\dots,p-1$ (cf. Figure~\ref{fig:h06_08}).

\subsubsection*{Cases $\ell=10,\dots,18$ }

Figure~\ref{fig:h10_18} shows that $h_\ell(x_j)$ is negative for some $x_j\in (0,1)$.

Considering $m_1\geq\ldots\geq m_n$, we have that the equation \eqref{eqDDS} implies
    
\[
\frac{ m_i }{r_i^3\sqrt{2}}\zeta_{\ell} + \frac{ 2m_0 }{r_i^3}
+ \sum_{j=i+1}^n \frac{m_j x_j^3}{r_i^3} h_\ell(x_j)
> \frac{ 2m_0 }{r_i^3}+\frac{ m_{i} }{r_i^3\sqrt{2}}\left(\zeta_{\ell}+\sqrt{2}(n-1)\min_{x\in [0,1]} h_\ell(x)\right).
\]

For each value of $\ell$, we have

\[
\zeta_{\ell}
\geq
\begin{cases}
10.9, & \ell=10,\\
12.45, & \ell=11,\\
14, & \ell = 12,\\
15.74, & \ell = 13,\\
17, & \ell = 14,\\
19.13, & \ell = 15,\\
20.8, & \ell = 16,\\
22, & \ell = 17,\\
24, & \ell = 18,
\end{cases}
\qquad \text{and} \qquad
h_{\ell}(x)
>
\begin{cases}
-0.48, & \ell=10,\\
-1.1, & \ell=11,\\
-1.82, & \ell = 12,\\
-2.61, & \ell = 13,\\
-4, & \ell = 14,\\
-4.5, & \ell = 15,\\
-5.6, & \ell = 16,\\
-7, & \ell = 17,\\
-8.2, & \ell = 18,
\end{cases}\quad \text{for all } x\in [0,1].
\]

It is now easy to check that $n$ cannot be greater than $17,9,6,5,4,4,3,3,3$ for $\ell = 10,\dots,18$ respectively, in order that $\zeta_\ell + \sqrt{2}(n-1) \min_{x\in [0,1]}h_\ell(x)>0$.
\end{itemize}
    
Therefore, we have established that $-\partial_i f_i - \sum_{\begin{smallmatrix}j=1\\j\neq i \end{smallmatrix}}^n \partial_j f_i$ is a sum of positive terms, thus proving the first claim.

\bigskip

By the \emph{implicit function theorem}, we conclude to the existence of a function $r=r(m_n)$ defined on a neighborhood $V$ of $m_n=0$, such that $f(r(m_n),m_n)=0$ for all $m_n\in V$.

\bigskip

\begin{itemize}
\item[\textbf{Claim} $2$:]
Let $\{a_k\}_{k\geq1}$ be a sequence in $V$ such that $\lim_{k\rightarrow +\infty} a_k = \sup V$.

Suppose that $\lim_{k\rightarrow +\infty} r_i(a_k) = +\infty$ for some index $i$. Then, it is necessary that $\lim_{k\rightarrow +\infty} r_{j}(a_k)=+\infty$ for $j< i$ because $f_i(r(a_k),a_k)=0$. Iterating, we need 
$\lim_{k\rightarrow +\infty} r_{j'}(a_k)=+\infty$, for $j'<j$, since $f_{j}(r(a_k),a_k)=0$. Thus, we conclude $\lim_{k\rightarrow +\infty} r_1(a_k)=+\infty$ yielding $\lim_{k\rightarrow +\infty} f_1 (r(a_k),a_k)=-\infty$. Contradiction.

\bigskip

\item[\textbf{Claim} $3$:]
Recall $V$ and $\{a_k\}_{k\geq1}$ previously defined.

Without loss of generality, suppose that for an index $i$ we have $\lim_{k\rightarrow +\infty} r_i(a_k)/r_j(a_k) =1$ with $j> i$. To preserve the equality $f_i(r(a_k),a_k)=0$, the equation \eqref{fiphi} implies $\lim_{k\rightarrow +\infty} r_{j'}(a_k)/r_{i}(a_k) = 1$ with $j'<i$. Again, $f_{j'}(r(a_k),a_k)=0$ requires that $\lim_{k\rightarrow +\infty} r_{j''}(a_k) /r_{j'}(a_k) = 1$ with $j''<j'$. Iterating this argument, we find $\lim_{k\rightarrow +\infty} r_{1}(a_k) / r_2(a_k) = 1$ yielding $\lim_{k\rightarrow +\infty} f_1(r(a_k),a_k) = -\infty$. Contradiction.
\end{itemize}

\end{proof}

\section{Constructive proof of existence for $n\in \{3,4\}$ and arbitrary $\ell$}\label{sec:analy1}

The proof is a completion of Saari's proof given in \cite{Sa2} for the existence of spiderweb central configurations when $n\in\{3,4\}$.  It makes an essential use of all properties proved in Lemma~\ref{lem:dico} and Corollary~\ref{cor:dico}.

\begin{theorem}\label{thm:n}
Let $n\in \N$, $\ell \in \N_{\geq2}$ and $(m_0,m) \in \R_{\geq0}\times \R_{>0}^n$. If $n\leq 4$, then there exists a $n\times \ell +1$ spiderweb central configuration.

Furthermore, in the case $n=2$, it is the unique such configuration for a fix $\lambda$. (The result for $n=2$ is already in \cite{MS}).
\end{theorem}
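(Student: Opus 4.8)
The plan is to adapt Saari's strategy from \cite{Sa2}, proceeding by induction on $n$ but with an explicit homotopy in the mass of the outermost circle, following the zeros of the map $\Lambda$ (or equivalently $f$) via the implicit function theorem and carefully controlling the boundary behavior. The base cases $n=1$ (regular $\ell$-gon with central mass, handled by \eqref{fiphi}) and $n=2$ (already in \cite{MS}, and recoverable here from Theorem~\ref{thm:restreint} plus the monotonicity in Lemma~\ref{lem:dico}) are immediate, and uniqueness for $n=2$ follows since the $\lambda_{2}$-equation is strictly monotone in $r_2$. So the real content is the passage $n=2\to 3$ and $n=3\to 4$. Starting from a central configuration with $n-1$ circles, I would adjoin an $n$-th circle of mass $m_n=0$; Theorem~\ref{thm:restreint} places it uniquely in a chosen gap. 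Then I increase $m_n$ from $0$ toward its target value, obtaining a curve $r(m_n)$ from the implicit function theorem, and I must show this curve extends to all $m_n\in\R_{>0}$: that is, (i) the relevant Jacobian stays invertible, (ii) radii stay bounded, and (iii) radii stay mutually separated and bounded away from $0$.

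For step (i), unlike in Theorem~\ref{thm:ell} the global strict diagonal dominance of $D_r f$ fails for large $\ell$, so I cannot simply invoke that argument. Instead I would work with the reduced system: because the configuration already satisfies the $\lambda$-equations on the first $n-1$ circles, along the homotopy it is natural to track only the equation $\lambda_n(r)=\lambda$ (or the coordinates $\Lambda_i$), using the sign structure of Lemma~\ref{lem:dico} and Corollary~\ref{cor:dico}. The key is that $\partial_{r_n}\lambda_n>0$ (Point 3 of Lemma~\ref{lem:dico}) while the off-diagonal couplings $\partial_{r_j}\lambda_n<0$ have controlled signs, and the $\Lambda$-system has the monotonicity pattern of Corollary~\ref{cor:dico}; for $n\le 4$ this $2\times2$ or $3\times3$ sign pattern should force invertibility of the relevant minor directly, or force the homotopy curve to be monotone in each coordinate. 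This is where the restriction $n\le 4$ enters: the combinatorics of the sign conditions in Corollary~\ref{cor:dico} close up for three or fewer $\Lambda$-equations but not obviously beyond.

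Steps (ii) and (iii) I would argue exactly as in the proof of Theorem~\ref{thm:ell}, Claims 2 and 3: if some $r_i(a_k)\to\infty$ along a sequence approaching $\sup$ of the interval of existence, then $f_i=0$ forces $r_{i-1}\to\infty$, and cascading down forces $r_1\to\infty$, whence $f_1\to-\infty$, a contradiction; similarly if two consecutive radii ratios tend to $1$, the cascade forces $r_1/r_2\to1$ and again $f_1\to-\infty$. One also needs $r_1$ bounded away from $0$, which follows from the $\lambda r_1$ term balancing the dominant $m_1\zeta_\ell/(2^{3/2}r_1^2)$ term in $f_1$. The main obstacle, and the place where Lemma~\ref{lem:p0} and the extra clause about the case $n=4$ become essential, is ensuring that during the homotopy the new circle does not collide with its neighbors and that the ordering of all $n$ radii is preserved; Lemma~\ref{lem:p0} supplies an initial position $p$ with $\lambda_1(p)<\dots<\lambda_n(p)$ and, for $n=4$, a choice of $(p_3,p_4)$ robust to motion of $r_2$, which is exactly the ingredient needed to run the continuation argument without the configuration degenerating. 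I expect verifying this non-degeneracy — translating the differential inequalities of Corollary~\ref{cor:dico} into a global statement that the continuation never hits the boundary of $\mathcal{R}^n$ — to be the hard part, and the reason the theorem is limited to $n\in\{3,4\}$.
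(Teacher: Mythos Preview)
Your proposal conflates two distinct continuation strategies and, in doing so, leaves the crucial step unproven. What you describe is a \emph{mass homotopy}: adjoin an $n$-th circle with $m_n=0$ via Theorem~\ref{thm:restreint} and then increase $m_n$ to its target value. That is the Corbera--Delgado--Llibre approach of Theorems~\ref{thm:CDL} and~\ref{thm:ell}, not Saari's. Along such a homotopy \emph{all} radii move (since $m_n$ enters every $f_i$), so the Jacobian that must stay invertible is the full $D_r f$ (or $D_r\Lambda$). You correctly note that diagonal dominance fails for large $\ell$, but your replacement---``the $2\times 2$ or $3\times 3$ sign pattern should force invertibility of the relevant minor directly''---is not an argument: Corollary~\ref{cor:dico} fixes the signs of the entries of $D_r\Lambda$, but a matrix with that sign pattern need not be nonsingular, and nothing in your outline excludes a singular Jacobian somewhere along the $m_n$-path. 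This is precisely the obstruction that confines Theorem~\ref{thm:ell} to small $\ell$. Your later invocation of Lemma~\ref{lem:p0} and its $n=4$ clause is a symptom of the confusion: that lemma plays no role in a mass homotopy starting from a restricted $(n{-}1)$-circle configuration.

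The paper's proof takes a different route that sidesteps this difficulty. All masses are fixed from the outset; the continuation is in the \emph{radii}. One starts from the spread-out position $p$ of Lemma~\ref{lem:p0} (with $\lambda_1(p)<\dots<\lambda_n(p)$), keeps $r_1=p_1$ fixed, and decreases $r_2$ toward $p_1$ while maintaining certain equalities $\Lambda_i=0$ via the implicit function theorem; the intermediate value theorem then produces a point where the remaining $\Lambda_i$ vanish. The gain is that each implicit-function step requires only a \emph{scalar} condition ($\partial_{r_{i+1}}\Lambda_i<0$ for $n=3$) or, for $n=4$, a single $2\times 2$ determinant $\bigl|D_{(r_3,r_4)}(\Lambda_1,\Lambda_3)\bigr|$ whose sign is read off directly from the entries in Corollary~\ref{cor:dico}; no global invertibility of the full $D_r f$ is ever needed. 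The $n=4$ clause of Lemma~\ref{lem:p0} is used exactly here, to guarantee that the constraint surface $\Lambda_3=0$ persists as $r_2$ moves. Your Claims~2 and~3 (boundedness and separation of radii) are fine and indeed parallel the paper, but they belong to the $m_n$-continuation of Theorem~\ref{thm:ell}, not to the argument actually required for arbitrary $\ell$.
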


\begin{proof} 
The proof consists in three parts, \textbf{Part A} (resp. \textbf{Part B} and \textbf{Part C}) sufficient for $n=2$ (resp. for $3$ and $4$).

\subsubsection*{Part A}

Let $\mathcal{N}\in \{2,3,4\}$ and fix $i \in \{1,2,3\}$. By Lemma~\ref{lem:p0}, there exists $p \in \mathcal{R}^{\mathcal{N}}$ such that $\Lambda_i(p)<0$. Moreover, Corollary~\ref{cor:dico} implies that $\lim_{r_{i+1} \searrow \, p_i}\Lambda_i= +\infty$ and is monotonously decreasing. Thus, the function $\Lambda_i$ has a unique zero. Taking $\mathcal{N}=2$ gives the unique spiderweb central configuration.

\subsubsection*{Part B}

Let $n=3$. Lemma~\ref{lem:p0} gives us an initial position such that the three circles satisfies $\lambda_1(p)< \lambda_2(p)<\lambda_3(p)< 0$. We drop the dependency on $p_1$ as we keep the position of the first circle steady.

Now, from \textbf{Part A} we know that we can bring the third circle closer to the second one to a unique $r^*_{3}$ such that

\[
\lambda_1(p_{2},r^*_{3})<\lambda_{2}(p_{2},r^*_{3})=\lambda_3(p_{2},r^*_{3}).
\]

Corollary~\ref{cor:dico} gives $\partial_{r_{3}} \Lambda_{2} < 0$ for all $(r_{2},r_{3})$, so, by the \emph{implicit function theorem}, there is a function $r_{3}=r_{3}(r_{2})$, defined on a neighborhood $V$ of $r_{2}=p_{2}$, such that

\[
r^*_{3}=r_{3}(p_{2}) \qquad \text{and} \qquad \Lambda_{2}(r_{2},r_{3}(r_{2}))=0 , \quad \forall r_{2}\in V.
\]

Since for any smaller $r_2$
\textbf{Part A} holds, the function $\Tilde{r}_{3}$ is unique and can be extended for all $r_{2}\in (p_1,p_{2}]$. 

Moreover, the function $r_{3}(r_{2})$ is strictly increasing since $r_{3}'(r_{2})=-\frac{\partial_{r_{2}}\Lambda_{2}}{\partial_{r_{3}}\Lambda_{2}}>0$. This implies that the limit $\lim_{r_{2}\searrow p_1}\lambda_1(r_{2},r_{3}(r_{2})) = +\infty$ is monotonous. Meanwhile, we can always assert that $\lambda_3<0$.

Thus, from equation \eqref{eq:n3} and the \emph{intermediate value theorem}, there exists a $r_{2}^{sol}\in (p_1,p_{2})$ such that

\[
\lambda_{1}(r_{2}^{sol},r_{3}(r_{2}^{sol}))=\lambda_{2}(r_{2}^{sol},r_{3}(r_{2}^{sol}))=\lambda_{3}(r_{2}^{sol},r_{3}(r_{2}^{sol})).
\]

\subsubsection*{Part C}

Let $n=4$. Once again, by Lemma~\ref{lem:p0} there is an initial position satisfying $\lambda_1(p)< \lambda_2(p)<\lambda_3(p)<\lambda_4(p)< 0$. Also, we keep $p_1$ fixed so that the value of $\lambda_i$ do not depend on $p_1$ anymore.

By \textbf{Part A}, we may bring the fourth circle closer to the third to a unique $r^*_{4}$ such that

\[
\lambda_1(p_{2},p_{3},r^*_{4}) < \lambda_2(p_{2},p_{3},r^*_{4}) < \lambda_3(p_{2},p_{3},r^*_{4})= \lambda_4(p_{2},p_{3},r^*_{4}).
\]

Since we have the inequality $\partial_{r_{4}} \Lambda_{3} < 0$ for all $(r_{2},r_{3},r_{4})$, the \textit{implicit function theorem} gives the existence of a function $r_{4}=z(r_{2},r_{3})$, defined on a neighborhood $V$ of $(r_{2},r_{3})=(p_{2},p_{3})$, such that

\[
z(p_{2},p_{3})=r^*_{4} \quad \text{et} \quad \Lambda_{3}(r_{2},r_{3},z(r_{2},r_{3}))=0 , \quad \forall (r_{2},r_{3}) \in V.
\]

However, we know that $\Lambda_{3}(r_{2},r_{3},p_{4}) <0$ for all $(r_2,r_{3}) \in (p_1,p_{2}]\times(r_{2},p_{3}]$. So, we can use \textbf{Part A} for each pair of values $(r_2,r_3)$. Thus, the function $z(r_{2},r_{3})$ is unique and can be uniquely extended for all $(r_{2},r_{3}) \in (p_1,p_{2}]\times (r_{2},p_{3}]$.

Moreover, the fact that $\frac{d}{d r_2}z(r_{2},p_{3})$ is strictly negative implies that $\Lambda_1$ is monotonously increasing in $r_2$ to $+\infty$.

Additionally, as we follow $z(r_2,r_3)$, we have

\[
\frac{d}{dr_2}\Lambda_2 = \frac{d}{dr_2}\left(\lambda_2 -\lambda_3\right) = \frac{d}{dr_2}\left(\lambda_2 -\lambda_4\right) >0.\]

Hence, $(\Lambda_1(p_{2},p_{3},z(p_{2},p_{3})),\Lambda_2(p_{2},p_{3},z(p_{2},p_{3}))\prec0$, so there exists a unique $\Hat{r}_{2} \in (p_1,p_{2})$, such that we have the inequality

\[
\lambda_1(\nu,z(\nu))=\lambda_{2}(\nu,z(\nu))<\lambda_{3}(\nu,z(\nu))=\lambda_{4}(\nu,z(\nu)),
\]

\noindent where $\nu=(\Hat{r}_{2},p_{3})$.

The point $(\Hat{r}_{2},p_{3})$ is then a zero of the function $\mathcal{L}(r_{2},r_{3},r_{4})=(\Lambda_{1},\Lambda_{3})$ whose jacobian with respect to $(r_{3},r_{4})$ is

\[
\left|D_{(r_{3},r_{4})} \mathcal{L}\right|
=\left|
\begin{pmatrix}
\partial_{r_{3}}\Lambda_{1} & \partial_{r_{4}}\Lambda_{1}\\
\partial_{r_{3}}\Lambda_{3} & \partial_{r_{4}}\Lambda_{3}
\end{pmatrix}
\right|<0.
\]

By the \textit{implicit function theorem}, we have the existence of a function $(r_{3},r_{4})=(r_{3}(r_2),r_{4}(r_2))=\psi(r_{2})$, defined on a neighborhood $V'$ of $r_{2}=\Hat{r}_{2}$, such that

\[
\psi(\Hat{r}_{2}) = (p_{3},z(\Hat{r}_{2},p_{3})) \quad \text{et} \quad \mathcal{L}(r_{2},\psi(r_{2}))=0,\quad \forall r_{2}\in V'.
\]

A quick look at the signs in

\[
\psi'(r_{2})=
\begin{pmatrix}
r_{3}'(r_{2})\\
r_{4}'(r_{2})
\end{pmatrix}
=
\frac{-1}{|D_{(r_{3},r_{4})}\mathcal{L}|}
\begin{pmatrix}
\partial_{r_{4}}\Lambda_{3} & -\partial_{r_{4}}\Lambda_{1}\\
-\partial_{r_{3}}\Lambda_{3} & \partial_{r_{3}}\Lambda_{1}
\end{pmatrix}
\begin{pmatrix}
\partial_{r_{2}}\Lambda_{1}\\
\partial_{r_{2}}\Lambda_{3}
\end{pmatrix}
\]

\noindent shows that $r'_{3}(r_{2})>0$. Whence, for any $r_2\in V' \cap (p_1,\hat{r}_2]$, we get $r_{3}(r_{2}) \in (r_{2},p_{3}]$. Thus, $(r_2,\psi(r_2))$ is contained within the surface given by $r_4 = z(r_{2},r_{3})$ for $(r_{2},r_{3}) \in (p_1,p_{2}]\times (r_{2},p_{3}]$, and this surface is included in $\mathcal{R}^3$ by construction.
Hence, provided $\inf V'> p_1$, the radius of the third and fourth circles, given by $\psi(r_2)$, cannot tend to the same limit as $r_2 \rightarrow \inf V'$.

Since the sign of the Jacobian is always strictly negative, the function $\psi(r_{2})$ may be uniquely extended for all $r_{2} \in (p_1,p_{2}]$. Notice that $\psi$ is unique since $\Hat{r}_2$ is unique.

On the one hand, we have seen that $\lambda_1(\Hat{r}_{2},\psi(\Hat{r}_{2}))=\lambda_{2}(\Hat{r}_{2},\psi(\Hat{r}_{2}))<\lambda_{3}(\Hat{r}_{2},\psi(\Hat{r}_{2}))=\lambda_{4}(\Hat{r}_{2},\psi(\Hat{r}_{2}))$. On the other hand, we always have the limit $\lim_{r_{2}\searrow p_1}\lambda_1(r_{2},\psi(r_{2})) = +\infty$ while $\lambda_4$ remains strictly negative. Consequently, the \emph{intermediate value theorem} implies the existence of $r^{sol}_{2}\in (p_i,p_{2})$ such that

\[
\lambda_1(r_{2}^{sol},\psi(r_{2}^{sol}))=\lambda_{2}(r_{2}^{sol},\psi(r_{2}^{sol}))=\lambda_{3}(r_{2}^{sol},\psi(r_{2}^{sol}))=\lambda_{4}(r_{2}^{sol},\psi(r_{2}^{sol})).
\]
\end{proof}

\begin{remark} The proof requires multiple uses of the implicit function theorem and, in each case, it was easy to show that the corresponding Jacobian had a fixed sign for all values of the $r_i$. 
Going to $n>4$, there seems no easier way than lengthy calculations for each particular value of $n$ to check the hypotheses of the implicit function theorem each time it is necessary to extend a solution by varying the $r_i$.\end{remark}

\section{Computer-assisted proof}\label{sec:numer}

Once again, we consider the map $f$ defined in the Section~\ref{fiphi}, that is

\begin{equation*}
f_i(r) =
\lambda \, r_i + \frac{ m_i }{2^{3/2}r_i^2}\zeta_{\ell} + \frac{ m_0}{r_i^2} + \sum_{\begin{smallmatrix}j=1\\j\neq i \end{smallmatrix}}^{n} \sum_{k=0}^{\ell-1} \frac{m_j( r_i - r_j \cos \theta_k )}{( r_i^2 + r_j^2 - 2 r_i r_j \cos \theta_k )^{3/2}} , \qquad i =1,\, \dots,\, n.
\end{equation*}

Let $A:\R^n\rightarrow \R^n$ be a linear operator and define $T:U\longrightarrow X$ by $T(x) := x - A f(x)$ where $U$ is an open set of $\R^n$.

Knowing an approximate zero of $f$, the \emph{radii polynomial approach} gives bounds so that we may find a ball, centered at this approximation, on which $T$ is a contraction to which we can apply the \emph{Banach fixed point theorem} and $A$ is non singular. Hence, it allows proving the existence and uniqueness of a true solution $\tilde{r}\in \R^n$ lying in this ball.

Due to the singularities in the equation \eqref{eq_newton} we must be careful in our numerical approach. We consider the \emph{local version in finite dimension of the radii polynomial approach} established by Lessard, that is we introduce an upper bound $\rho_*$ for the radius of the ball in order to remain away from any singularities.

\begin{theorem}[Radii polynomial \cite{Lessard}]\label{thm:radii}
Let $U$ an open set of $\mathbb{R}^n$ and $f:U \rightarrow \mathbb{R}^n$ a map of class $C^1$. Let $\bar{x} \in U$ and $A \in M_n(\mathbb{R})$. Let $\rho_*>0$ such that $\overline{B_{\rho_*}(\bar{x})} \subset U$. Let $Y_0,Z_0\in \R$, and $Z_2 : (0,\rho_*]\rightarrow[0,\infty)$ satisfying

\begin{align*}
\|A f(\bar{x})\|&\leq Y_0,\\
\|Id-A D_x f(\bar{x})\|&\leq Z_0,\\
\|A [D_x f(c)-D_x f(\bar{x})]\|&\leq Z_2(\rho)\rho, \quad \forall c \in \overline{B_{\rho}(\bar{x})} \text{ et } \rho \in (0,\rho_*].
\end{align*}
    
Define the radii polynomial by
    
\begin{equation}
p(\rho) := Z_2(\rho)\rho^2-(1-Z_0)\rho+Y_0.
\end{equation}
    
If there exists $\rho_0 \in (0,\rho_*]$ such that $p(\rho_0)<0$, then $A$ is invertible and there exists a unique $\tilde{x} \in \overline{B_{\rho_0}(\bar{x})}$ satisfying $f(\tilde{x})=0$.
\end{theorem}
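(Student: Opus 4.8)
The plan is to reduce the existence-and-uniqueness statement to the Banach fixed point theorem applied to the Newton-like operator $T(x) := x - A f(x)$ on the closed ball $\overline{B_{\rho_0}(\bar{x})}$. First I would verify that $T$ maps this ball into itself. For $x \in \overline{B_{\rho_0}(\bar{x})}$, write $T(x) - \bar{x} = -A f(\bar{x}) + [T(x) - T(\bar{x})]$, bound the first term by $Y_0$, and bound the second term using the mean value inequality: $\|T(x) - T(\bar{x})\| \le \sup_{c \in \overline{B_{\rho_0}(\bar{x})}} \|D_x T(c)\|\, \|x - \bar{x}\|$, where $D_x T(c) = Id - A D_x f(c) = (Id - A D_x f(\bar{x})) + A(D_x f(\bar{x}) - D_x f(c))$, so $\|D_x T(c)\| \le Z_0 + Z_2(\rho_0)\rho_0$. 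Hence $\|T(x) - \bar{x}\| \le Y_0 + (Z_0 + Z_2(\rho_0)\rho_0)\rho_0 = \rho_0 + p(\rho_0) < \rho_0$, using the definition of $p$ and the hypothesis $p(\rho_0) < 0$; this gives the self-map property.

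Next I would show $T$ is a contraction on the ball. For $x, y \in \overline{B_{\rho_0}(\bar{x})}$, again by the mean value inequality $\|T(x) - T(y)\| \le \sup_{c \in \overline{B_{\rho_0}(\bar{x})}} \|D_x T(c)\|\, \|x - y\| \le (Z_0 + Z_2(\rho_0)\rho_0)\|x - y\|$; the contraction constant $\kappa := Z_0 + Z_2(\rho_0)\rho_0$ is strictly less than $1$ because $p(\rho_0) < 0$ forces $Z_2(\rho_0)\rho_0^2 + Y_0 < (1 - Z_0)\rho_0$, and since $Y_0 \ge 0$ and $\rho_0 > 0$ we get $Z_2(\rho_0)\rho_0 < 1 - Z_0$, i.e. $\kappa < 1$. (A minor point: here one needs $Z_0 < 1$, which is itself a consequence of $p(\rho_0) < 0$ together with $Y_0, Z_2(\rho_0)\rho_0^2 \ge 0$.) The Banach fixed point theorem then yields a unique fixed point $\tilde{x} \in \overline{B_{\rho_0}(\bar{x})}$ of $T$.

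Finally I would translate the fixed point back into the original problem. If $A$ is invertible, then $T(\tilde{x}) = \tilde{x}$ is equivalent to $A f(\tilde{x}) = 0$, i.e. $f(\tilde{x}) = 0$, giving the desired unique zero in the ball. To see that $A$ is invertible: from $\|Id - A D_x f(\bar{x})\| \le Z_0 < 1$, a Neumann series argument shows $A D_x f(\bar{x})$ is invertible, hence $A$ is surjective; since $A$ is a square matrix, it is invertible. I expect the bulk of the argument to be this routine chain of estimates; the one genuine subtlety — and the main thing to state carefully — is the deduction $Z_0 < 1$ and $\kappa < 1$ from the single scalar inequality $p(\rho_0) < 0$ (rather than assuming them separately), together with keeping track of the requirement $\overline{B_{\rho_0}(\bar{x})} \subset U$ so that $D_x f$ is defined throughout the ball, which is exactly why the hypothesis $\overline{B_{\rho_*}(\bar{x})} \subset U$ and the bound $\rho_0 \le \rho_*$ are imposed.
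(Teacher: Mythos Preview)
Your argument is correct and is exactly the standard route to this result: set up the Newton-like map $T(x)=x-Af(x)$, use the bounds $Y_0,Z_0,Z_2$ to show $T$ is a self-map and a contraction on $\overline{B_{\rho_0}(\bar x)}$, apply the Banach fixed point theorem, and recover invertibility of $A$ from $Z_0<1$ via a Neumann series. The algebraic identity $Y_0+(Z_0+Z_2(\rho_0)\rho_0)\rho_0=\rho_0+p(\rho_0)$ and the deduction $\kappa<1$ from $p(\rho_0)<0$ are handled correctly.

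Note, however, that the paper does not give its own proof of this theorem: it is quoted from \cite{Lessard} and used as a black box for the computer-assisted part. So there is no ``paper's proof'' to compare against here; your write-up simply supplies the (standard) argument that the cited reference contains.
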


\begin{remark}
The choice of $\rho_*$ is quite arbitrary. We make an intial heuristic choice and check \textit{a posteriori} that if there exists $\rho_0$ such that $p(\rho_0)<0$ then $\rho_0 < \rho_*$. Otherwise, we must increase the value of $\rho_*$.
\end{remark}

The quantity $\bar{x}$ corresponds to a numerical approximation of the zero of $f$ obtained via Newton's method.

\subsection{Computation of the bounds}

Accordingly to Newton's method, we choose $A$ to be the numerical inverse of $D_{r}f(\bar{r})$ where $\bar{r}$ is the numerical zero of $f$.

To rigorously compute the bounds, we use techniques of interval arithmetic  \cite{Ru99a}. Let us work in $(\R^n,\|\cdot\|_\infty)$. The bounds $Y_0$ and $Z_0$ can be found immediately from the theorem, knowing that the Jacobian matrix $D_{r} f$ is given by

\[
\partial_j f_i =
\begin{cases}
\displaystyle \lambda - \frac{ m_i }{r_i^3\sqrt{2}}\zeta_{\ell} - \frac{ 2m_0 }{r_i^3}
-\sum_{\begin{smallmatrix}j=1\\j\neq i \end{smallmatrix}}^{n} \frac{m_j}{2}\sum_{k=0}^{\ell-1}
\frac{4r_i^2+r_j^2-8 r_i r_j \cos \theta_k + 3 r_j^2 \cos 2\theta_k}{( r_i^2 + r_j^2 - 2 r_i r_j \cos \theta_k )^{5/2}},& j=i,\\
\displaystyle -\frac{m_j}{2} \sum_{k=0}^{\ell-1}
\frac{-4(r_i^2+r_j^2)\cos \theta_k + r_i r_j (7+ \cos 2\theta_k)}{( r_i^2 + r_j^2 - 2 r_i r_j \cos \theta_k)^{5/2}}, & j\neq i.
\end{cases}
\]

In the infinity norm, it is possible to obtain a general expression for the $Z_2$ bound, assuming $f\in C^2$, by applying the mean value theorem.

\begin{lemma}\label{lem:z2}
\[
Z_2=\sup_{b \in \overline{B_{\rho_*}(\bar{x})}}\left(
\max_{1\leq i \leq n} \sum_{1\leq k,m\leq n} \left|
\sum_{1\leq j \leq n} A_{ij} \partial^2_{km}f_j(b)
\right|
\right).
\]
\end{lemma}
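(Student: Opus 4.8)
The plan is to derive the bound directly from a second-order Taylor expansion of each component $f_j$ with integral (or mean-value) remainder, and then pass to the operator norm of $A$ acting on the resulting estimate. First I would fix $c \in \overline{B_\rho(\bar x)}$ with $\rho \in (0,\rho_*]$, write $v = c - \bar x$, and for each $j$ apply the one-dimensional mean value theorem to the function $t \mapsto \partial_k f_j(\bar x + tv)$ along the segment, so that
\[
\partial_k f_j(c) - \partial_k f_j(\bar x) = \sum_{m=1}^n \partial^2_{km} f_j\bigl(b_{jk}\bigr)\, v_m
\]
for some $b_{jk}$ on the segment from $\bar x$ to $c$; since the segment lies in the convex set $\overline{B_{\rho_*}(\bar x)}$, each $b_{jk}$ belongs to that ball. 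This is where the $C^2$ hypothesis is used.

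Next I would assemble these into the matrix $D_xf(c) - D_xf(\bar x)$ and apply $A$ on the left. For the $i$-th component of $A[D_xf(c)-D_xf(\bar x)]w$ acting on an arbitrary $w \in \R^n$ with $\|w\|_\infty \le 1$, one gets
\[
\Bigl(A[D_xf(c)-D_xf(\bar x)]w\Bigr)_i = \sum_{k=1}^n \sum_{j=1}^n A_{ij}\bigl(\partial_k f_j(c)-\partial_k f_j(\bar x)\bigr) w_k
= \sum_{k,m} \Bigl(\sum_j A_{ij}\partial^2_{km}f_j(b_{jk})\Bigr) v_m w_k,
\]
so its absolute value is at most $\rho \cdot \sum_{1\le k,m\le n} \bigl|\sum_j A_{ij}\partial^2_{km}f_j(b_{jk})\bigr|$ using $\|v\|_\infty \le \rho$ and $|w_k| \le 1$. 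Taking the max over $i$ and the operator norm definition gives $\|A[D_xf(c)-D_xf(\bar x)]\| \le Z_2\,\rho$ with $Z_2$ as stated, once we replace the dependence on the particular points $b_{jk}$ by a supremum over $b \in \overline{B_{\rho_*}(\bar x)}$ (and, to be safe, the same $b$ in every slot, which only enlarges the bound). This verifies the hypothesis $\|A[D_xf(c)-D_xf(\bar x)]\| \le Z_2(\rho)\rho$ of Theorem~\ref{thm:radii} with the constant function $Z_2(\rho) \equiv Z_2$.

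The only mild subtlety — and the step I would be most careful about — is the bookkeeping between "one $b_{jk}$ per entry of the Jacobian" coming from the mean value theorem and the single supremum over $b$ in the final formula: one must observe that replacing each $\partial^2_{km}f_j(b_{jk})$ by its worst case over all admissible points (and hence by a common sup over $\overline{B_{\rho_*}(\bar x)}$) is legitimate because every $b_{jk}$ lies in $\overline{B_{\rho_*}(\bar x)}$ and we are only seeking an upper bound. Everything else is a routine manipulation of the $\|\cdot\|_\infty$ operator norm, namely that for a matrix $B$ one has $\|B\|_\infty = \max_i \sum_k |B_{ik}|$, applied to $B = A[D_xf(c)-D_xf(\bar x)]$ whose $(i,k)$ entry has just been bounded by $\rho \sum_m |\sum_j A_{ij}\partial^2_{km}f_j(b)|$. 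Finally I would remark that in the rigorous implementation this $Z_2$ is evaluated with interval arithmetic by enclosing each second partial $\partial^2_{km}f_j$ over the box $\overline{B_{\rho_*}(\bar r)}$, which is where the choice of a not-too-large $\rho_*$ (keeping the box away from the collision singularities of \eqref{eq_newton}) becomes essential.
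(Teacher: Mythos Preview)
Your plan is close to correct, and the paper itself only says the formula follows ``by applying the mean value theorem'', so you are filling in details the authors omit. However, there is a genuine gap at precisely the place you flag as the ``mild subtlety''.

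When you apply the one-dimensional mean value theorem to each scalar function $t\mapsto\partial_k f_j(\bar x+tv)$ separately, the intermediate point $b_{jk}$ depends on $j$. Consequently the quantity you need to bound is
\[
\Bigl|\sum_{j} A_{ij}\,\partial^2_{km}f_j(b_{jk})\Bigr|,
\]
with a \emph{different} evaluation point in each summand. Your claim that passing to a single $b$ and taking $\sup_b$ ``only enlarges the bound'' is not correct: because the absolute value sits \emph{outside} the sum over $j$, forcing a common argument can create cancellation that is absent when the $b_{jk}$ differ. A toy model makes this clear: with $n=2$, $A_{i1}=A_{i2}=1$, and $\partial^2_{km}f_1\equiv -\partial^2_{km}f_2$, one has $\sup_b\bigl|\sum_j A_{ij}\partial^2_{km}f_j(b)\bigr|=0$, whereas $\bigl|\partial^2_{km}f_1(b_{1k})+\partial^2_{km}f_2(b_{2k})\bigr|$ can be strictly positive. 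So the inequality you need fails in general.

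The fix is already implicit in your opening sentence: use the integral remainder rather than the componentwise mean value theorem. Writing
\[
\partial_k f_j(c)-\partial_k f_j(\bar x)=\int_0^1\sum_{m}\partial^2_{km}f_j(\bar x+tv)\,v_m\,dt
\]
keeps all indices $j$ evaluated at the \emph{same} point $\bar x+tv$, so after multiplying by $A$ and moving the absolute value inside the integral you obtain, for each $i$,
\[
\sum_k\bigl|\bigl(A[D_xf(c)-D_xf(\bar x)]\bigr)_{ik}\bigr|
\le\rho\int_0^1\sum_{k,m}\Bigl|\sum_j A_{ij}\,\partial^2_{km}f_j(\bar x+tv)\Bigr|\,dt
\le\rho\sup_{b\in\overline{B_{\rho_*}(\bar x)}}\sum_{k,m}\Bigl|\sum_j A_{ij}\,\partial^2_{km}f_j(b)\Bigr|,
\]
and taking $\max_i$ (which commutes with $\sup_b$) yields exactly the $Z_2$ of the lemma. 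With this adjustment the rest of your write-up, including the $\|\cdot\|_\infty$ bookkeeping and the remark on interval enclosures, goes through unchanged.
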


We use this lemma to estimate the bound. The tensor $D^2_{r}f$ is given by

\[
\partial^2_{lj}f_i
=\tiny{
\begin{cases}
\displaystyle \frac{3m_i}{\sqrt{2} r_i^4} \zeta_{\ell} + \frac{ 6m_0 }{r_i^4}
+ \sum_{\begin{smallmatrix}j=1\\j\neq i \end{smallmatrix}}^{n} \frac{3m_j}{2}\sum_{k=0}^{\ell-1}
\frac{(r_i - r_j \cos \theta_k) (4 r_i^2 - r_j^2 - 8 r_i r_j \cos \theta_k + 5 r_j^2 \cos 2\theta_k)}{(r_i^2 + r_j^2 - 2 r_i r_j \cos \theta_k)^{7/2}}, & l=j=i,\\
\displaystyle -\frac{3m_j}{4}\sum_{k=0}^{\ell-1}\frac{(r_i (8 r_i^2 + 23 r_j^2) \cos \theta_k - r_j (20 r_i^2 + 2 r_j^2 + (4 r_i^2 + 6 r_j^2) \cos 2\theta_k - r_i r_j \cos 3 \theta_k))}{(r_i^2 + r_j^2 - 2 r_i r_j \cos \theta_k)^{7/2}}, & l\neq j=i \text{ ou } j\neq l=i,\\
\displaystyle -\frac{3m_j}{4}\sum_{k=0}^{\ell-1}\frac{(r_j ( 8 r_j^2+23 r_i^2) \cos \theta_k - r_i ( 20 r_j^2 + 2 r_i^2 + (4 r_j^2 + 6 r_i^2) \cos 2\theta_k - r_i r_j \cos 3 \theta_k))}{(r_i^2 + r_j^2 - 2 r_i r_j \cos \theta_k)^{7/2}}, & l=j\neq i,\\
\displaystyle 0, & l\neq j \neq i.
\end{cases}}
\]

We unfold $D^2_r f$ and represent it by the $n\times n^2$ matrix $B(b)=(B_1(b)\mid\dots\mid B_n(b))$, where

\[
B_i =
\begin{pmatrix}
-\partial_{1i}^2\frac{F_1(b)}{m_1} & \dots &  -\partial_{ji}^2\frac{F_1(b)}{m_1} & \dots &
-\partial_{ni}^2\frac{F_1(b)}{m_1}\\
\vdots & & \vdots & & \vdots \\
-\partial_{1i}^2\frac{F_k(b)}{m_k} & \dots &  -\partial_{ji}^2\frac{F_k(b)}{m_k} & \dots &
-\partial_{ni}^2\frac{F_k(b)}{m_k}\\
\vdots & & \vdots & & \vdots \\
-\partial_{1i}^2\frac{F_n(b)}{m_n} & \dots &  -\partial_{ji}^2\frac{F_n(b)}{m_n} & \dots &
-\partial_{ni}^2\frac{F_n(b)}{m_n}\\
\end{pmatrix}, \qquad  i =1,\dots,n.
\]

Whence,

\[
Z_2
= \sup_{b\in\overline{B_{\rho_*}(\bar{r})}}\|A B(b)\|_{\infty}
= \sup_{b \in \overline{B_{\rho_*}(\bar{r})}}\left(
\max_{1\leq i \leq n} \sum_{1\leq k,m\leq n} \left|
\sum_{1\leq j \leq n} A_{ij} \partial^2_{km}f_j(b)
\right|
\right).
\]

\subsection{Numerical experimentations with circles of equal mass}

We have tested successfully our algorithm with $\lambda=-1$ for all $n\leq 100$, $\ell \leq 200$ even when $m_0=0$ and $m_1=\ldots=m_n$. The bounds have been rigorously computed using interval arithmetic. 

From our many numerical experimentations we are led to believe that the  $n \times \ell$ and $n \times \ell+1$ spiderweb central configurations not only exist, but are unique in the sense of Section \ref{sec:scalings}. Saari stated the same result in his papers \cite{Sa1} and \cite{Sa2}.

\section{Mass distribution}\label{sec:mass}

The numerical approach allows quantitative insights on spiderweb central configurations. All the profiles studied in this section are validated by applying Theorem~\ref{thm:radii} to each spiderweb central configuration.

For this purpose we introduce some invariants of the configurations. The first is the \emph{relative spacing} between consecutive circles (see Figure~\ref{fig:radii})

\[
a_i = a_i(\ell)= \frac{r_{i+1}(\ell)-r_i(\ell)}{r_1(\ell)}.
\]

The second is the  \emph{relative width} of a spiderweb central configuration  given by (see Figure~\ref{fig:width})

\[
b = b(\ell) = \sum_{i=1}^{n-1}a_i(\ell)= \frac{r_{n}(\ell)-r_1(\ell)}{r_1(\ell)}.
\]

Depending on the context, we write explicitly the dependence on $\ell$.

\begin{figure}
\centering
\subfigure[$\ell=2$]{
\label{fig:radii-a}
\includegraphics[height=2in]{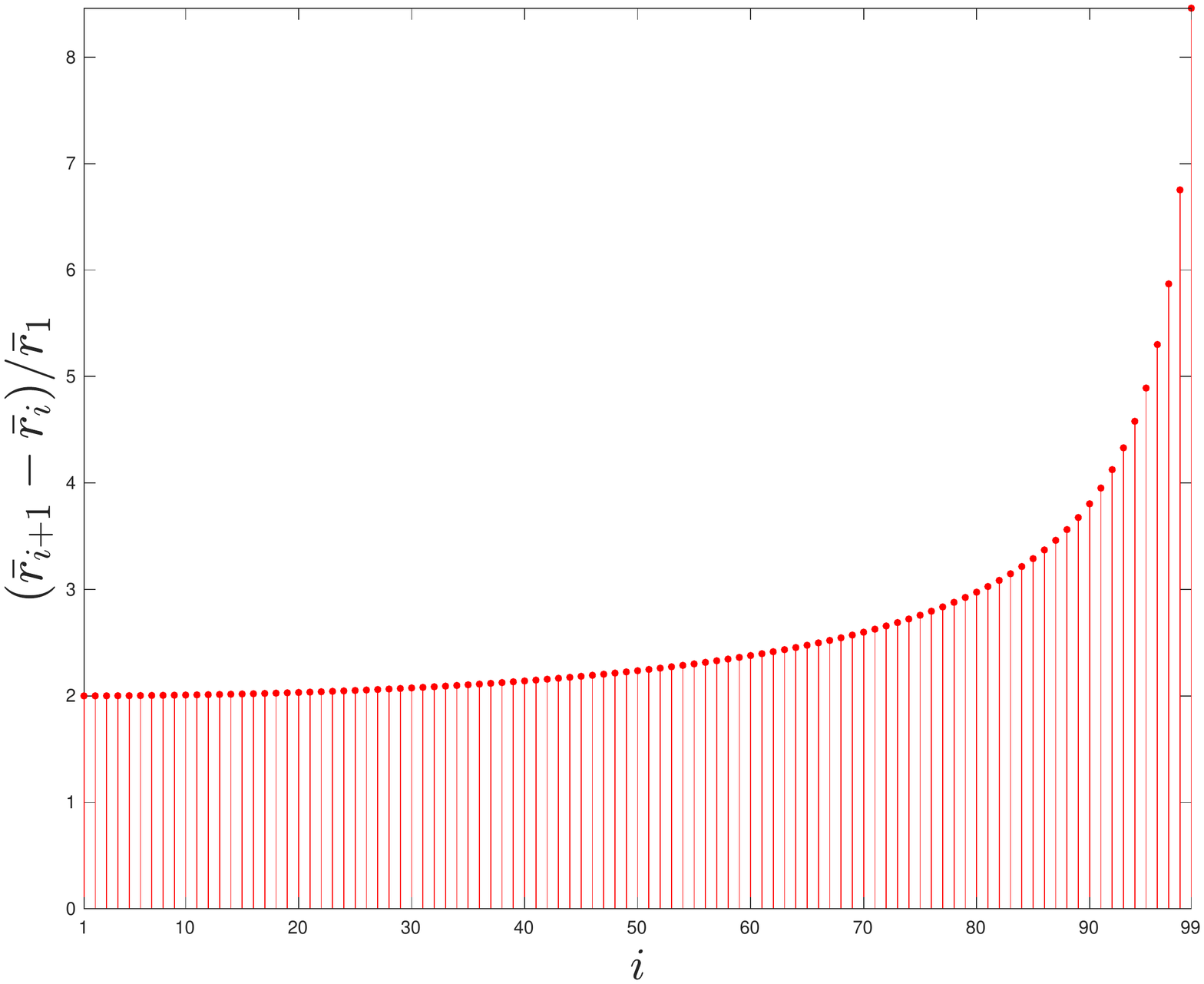}}
\hspace{8pt}
\subfigure[$\ell=6$]{
\label{fig:radii-b}
\includegraphics[height=2in]{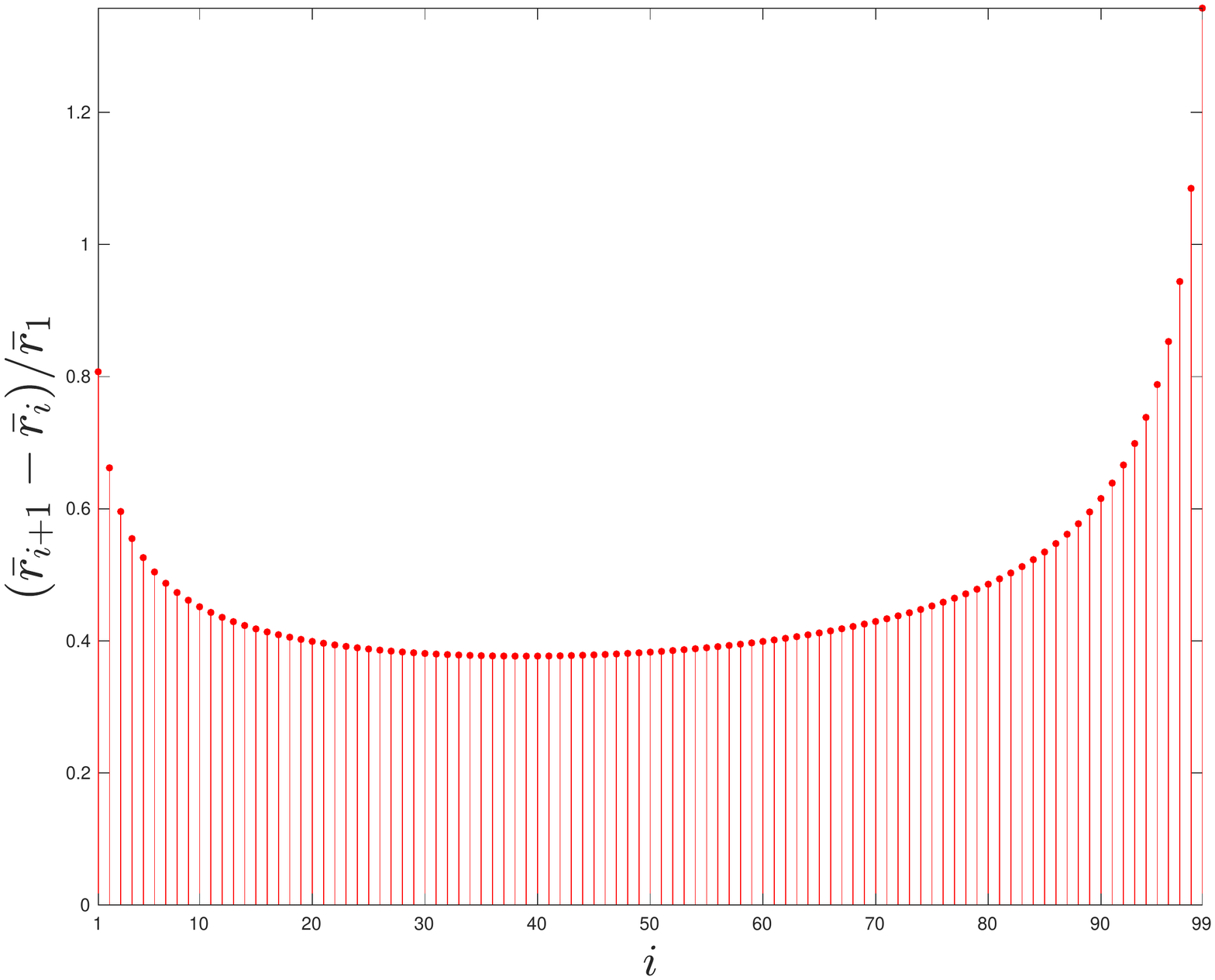}}\\
\subfigure[$\ell=100$]{
\label{fig:radii-c}
\includegraphics[height=2in]{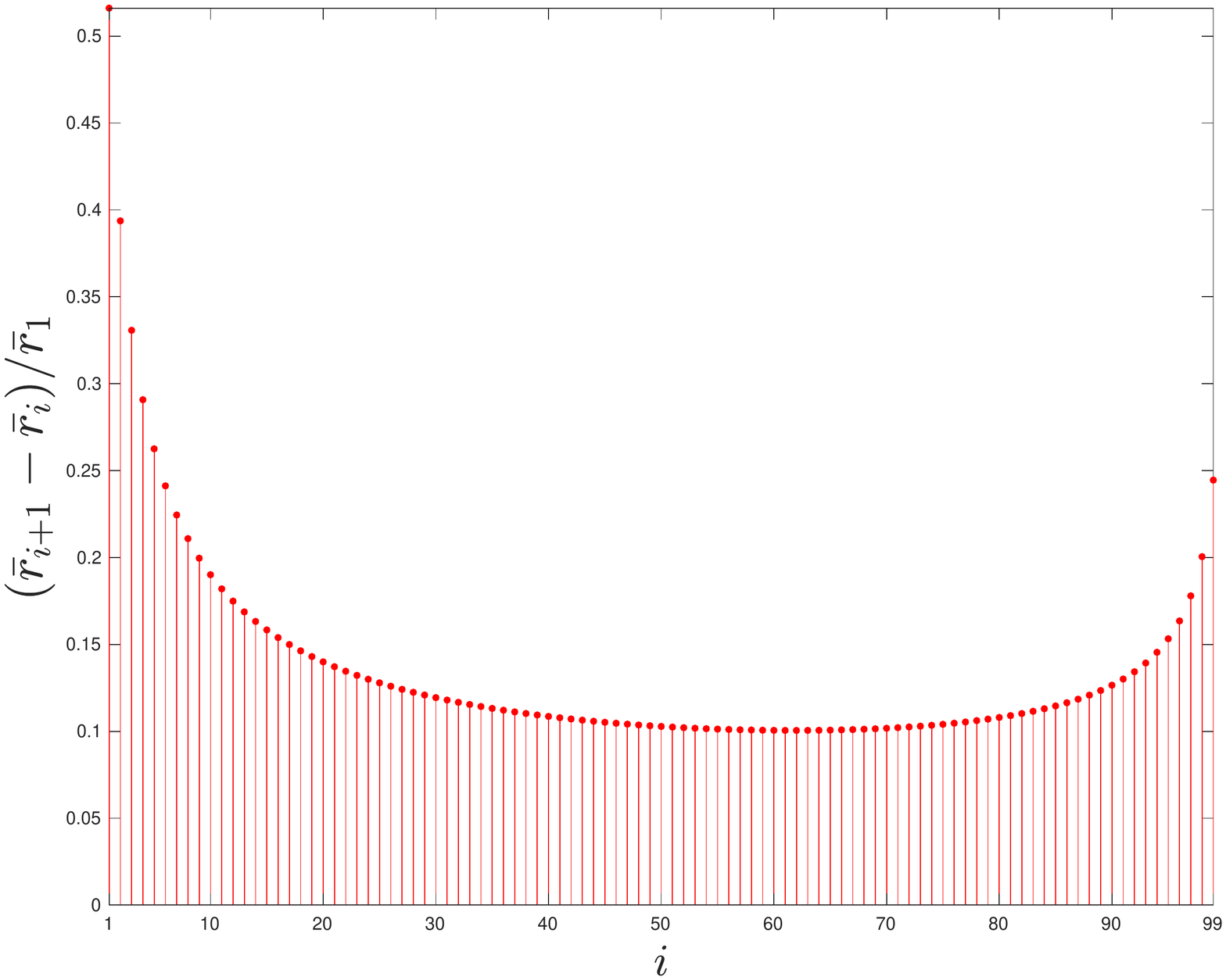}} 
\hspace{8pt}
\subfigure[$\ell=200$]{
\label{fig:radii-d}
\includegraphics[height=2in]{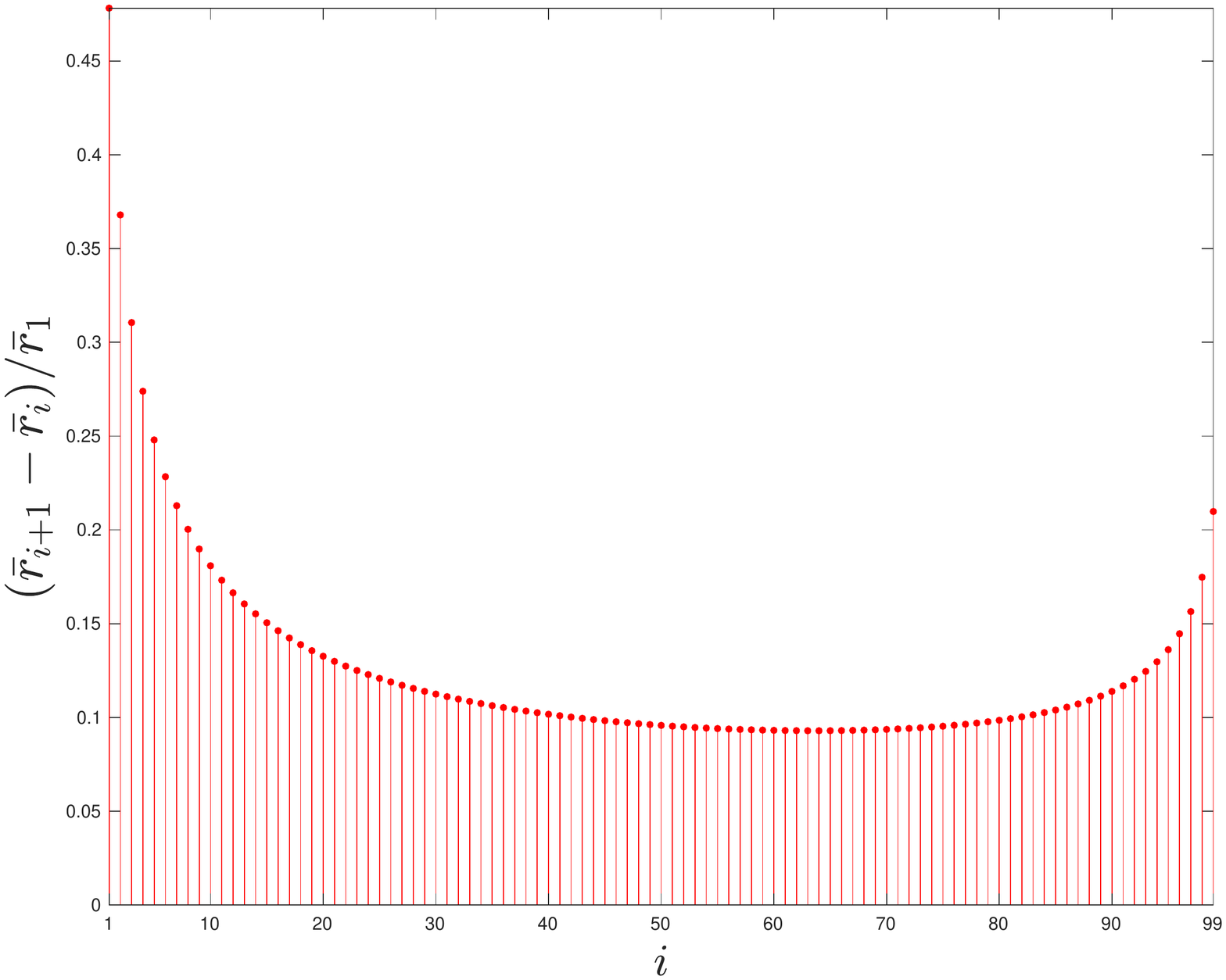}}
\caption{Spacing between consecutive circles of spiderweb central configurations with circles of equal mass and $\lambda=-1$, $n=100$, $m_0=0$ for different values of $\ell$.}\label{fig:radii}
\end{figure}

\begin{figure}
\centering
\includegraphics[width=3in]{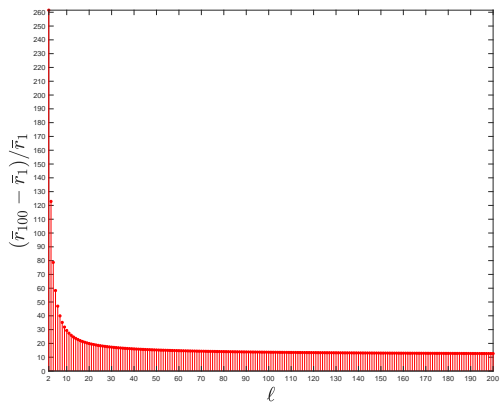}
\caption{Evolution of the relative width with respect to $\ell$ of spiderweb central configurations with circles of equal mass and $\lambda=-1$, $n=100$, $m_0=0$.}
\label{fig:width}
\end{figure}

\begin{conjecture} (See Figure~\ref{fig:radii}) 
For circles of equal mass and any $n\in \N$, $\ell \in \N_{\geq2}$, the sequence $\{a_i\}_{1\leq i\leq n-1}$ is convex. When $\ell=2$ and only in this case, the sequence is strictly increasing.

Moreover, let $a_{i^*}$ be the maximum of the $a_i$. From the convexity we know that $i^*\in\{1,n-1\}$ and, a priori,  we cannot exclude that $a_1= a_{n-1}$. Numerically however we have never seen $a_1= a_{n-1}$. Hence we could have a unique $i^*$. There exists an increasing function $\mu : \N \longrightarrow \N$ such that 

\[
i^* =
\begin{cases}
1, & \text{if }\quad \ell \geq \mu(n),\\
n-1, & \text{if }\quad \ell < \mu(n).
\end{cases}
\]

Numerically $\mu(n)$ seems small compared to $n$. Thus, $i^* = 1$ whenever $\ell \geq n$.
\end{conjecture}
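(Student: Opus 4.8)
\medskip
\noindent\emph{Proof proposal.} The statement above is a conjecture, and a complete proof is not available; what follows is the route one would take, together with where it is expected to break down. The first step is a reformulation. Since $a_i=(r_{i+1}-r_i)/r_1$ with $r_1>0$ a fixed normalisation, convexity of $\{a_i\}_{1\le i\le n-1}$ is exactly the nonnegativity of the third finite difference of the radii,
\[
r_{j+3}-3r_{j+2}+3r_{j+1}-r_j\ \ge\ 0,\qquad 1\le j\le n-3,
\]
where $r=(r_1,\dots,r_n)$ solves the central-configuration system $\lambda_1(r)=\dots=\lambda_n(r)=\lambda$ with $m_1=\dots=m_n$; strict increase of $\{a_i\}$ is the strict second-difference inequality $r_{i+2}-2r_{i+1}+r_i>0$; and, once convexity is in hand, the last part of the conjecture concerns only the sign of $a_1-a_{n-1}$ as $\ell$ varies. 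The plan is thus to treat three separate points: strict monotonicity when $\ell=2$, convexity for general $\ell$, and the $\ell$-dependence of $\operatorname{sign}(a_1-a_{n-1})$.

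\medskip
\noindent\textbf{The case $\ell=2$.} Here the spiderweb is the Moulton collinear configuration: $2n$ equal masses at $\pm r_1,\dots,\pm r_n$ together with $m_0$ at the origin, and one wants the consecutive gaps $r_{i+1}-r_i$ to increase strictly with $i$. The natural approach is induction on $n$, working straight from the balance equations $\lambda_i=\lambda$: rewriting the $i$-th equation as a relation between $r_i$, the outer gap $r_{i+1}-r_i$ and the inner data $r_1,\dots,r_{i-1}$, and using that each interior body is pulled strictly harder toward the origin by the bodies on the opposite side than by its mirror body, one should be able to propagate $r_{i+1}-r_i>r_i-r_{i-1}$ outward from the innermost gap; alternatively one could invoke the classical literature on collinear central configurations with equal masses. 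That $\ell=2$ is the \emph{only} case with a strictly increasing sequence would follow from the $\mu(n)$ statement below, since as soon as $\ell\ge n\ge 3$ the maximum of $\{a_i\}$ is $a_1$, precluding a monotone increase.

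\medskip
\noindent\textbf{Convexity for general $\ell$.} There seem to be two complementary lines of attack. The first is a continuum argument: for $n$ large, regard $i\mapsto r_i$ as a sampling of a smooth radial profile, pass to the limit in the equations $\lambda_i=\lambda$ (the sums $\sum_{j\neq i}$ becoming integrals against the limiting radial mass density), extract the resulting integro-differential equation for the profile, and show that its solutions have nonnegative third derivative; the finitely many small-$n$ cases would then be closed by the rigorous scheme of Section~\ref{sec:numer}. The second line works with the finite system directly: by Corollary~\ref{cor:dico} the Jacobian of the map $r\mapsto\Lambda(r)=(\lambda_1-\lambda_2,\dots,\lambda_{n-1}-\lambda_n)$ has a rigid sign pattern, so $\Lambda=0$ is the zero of a map whose linearisation is an $M$-matrix-like object with monotone inverse, and the hope is to translate that monotonicity into a sign for $r_{j+3}-3r_{j+2}+3r_{j+1}-r_j$. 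The hard part — and this is expected to be the real obstacle — is exactly this translation: convexity is a \emph{global} second-order feature of the solution of a strongly coupled nonlinear system for which, at equal masses, there is no free parameter to continue along, so the implicit-function and differential-inequality tools of Lemma~\ref{lem:dico} do not apply in any direct way.

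\medskip
\noindent\textbf{The threshold function $\mu$.} Granting convexity, $i^*\in\{1,n-1\}$ automatically, and which of the two it is depends only on $\operatorname{sign}(a_1-a_{n-1})=\operatorname{sign}(r_2-2r_1+r_n-r_{n-1})$. One would then show that, for fixed $n$ and fixed $m_i$, this sign is monotone in $\ell$: this is a comparison between the central configurations for two consecutive values of $\ell$, in which only $\zeta_\ell$ and the functions $\phi_1$ (via $\theta_k=2\pi k/\ell$) change, and both change monotonically; the value of $\ell$ at which the sign flips is then the desired $\mu(n)$, its monotonicity in $n$ and the bound $\mu(n)\le n$ coming from the same comparison by checking that once $\ell=n$ the inner spacing already dominates. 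Here too the missing ingredient is a clean comparison principle across different $\ell$, which the tools developed so far do not supply; establishing such a principle, guided by the numerical profiles of Section~\ref{sec:mass}, is what this step would need.
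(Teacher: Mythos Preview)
The statement is explicitly a \textbf{Conjecture} in the paper, supported only by the numerical evidence of Figures~\ref{fig:radii} and~\ref{fig:width}; the paper offers no proof, partial proof, or proof sketch. You recognise this at the outset, so there is nothing to compare your proposal against: on this point you and the paper agree.

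That said, a few remarks on the content of your sketch. Your reformulation of convexity of $\{a_i\}$ as nonnegativity of the third finite difference of the radii is correct and useful. For $\ell=2$, your suggestion to invoke the collinear (Moulton) literature is the natural first move, though note that even for equal masses a proof that the gaps increase strictly does not appear to be standard and would itself need to be supplied or cited precisely. For general $\ell$, both of your proposed lines --- a continuum limit and an $M$-matrix-type monotonicity argument from the sign pattern in Corollary~\ref{cor:dico} --- are plausible heuristics, and you are right to flag that neither currently closes: the implicit-function and sign tools of Lemma~\ref{lem:dico} control first derivatives of the $\lambda_i$, not the higher-order structure needed for third differences of $r$. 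Your treatment of the threshold $\mu(n)$ via monotonicity in $\ell$ is also reasonable in spirit, but be aware that $\phi_1$ and $\zeta_\ell$ do not depend on $\ell$ in a way that yields an obvious pointwise comparison principle between configurations at different $\ell$; this is, as you say, the missing ingredient. In short: your proposal is an honest outline of where a proof would have to go and why it is not yet in reach, which is exactly the status the paper itself assigns to the statement.
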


Let $\chi(\eta) = \#\{ j\in \{1,\dots,n\} \,:\, r_j\leq \eta\}$. The mass distribution of a spiderweb central configuration, with $n$ circles, $m_0=0$ and $\ell$ equal masses per circle $m_1,\dots,m_n$, is given by

\[
M(\eta) = \ell \sum_{i=1}^{\chi(\eta)} m_i .
\]

By definition, $M(\eta)$ is given for the values of $\lambda$ and  $m_1,\dots,m_n$ chosen beforehand. What is remarkable is that while the sequence $\{a_1, \dots, a_n\}$ can have a very wild behavior when the sequence $\{m_1, \dots, m_n\}$ is irregular, the mass distribution $M(\eta)$ looks very regular (see Figures~\ref{figKappa} and \ref{fig:inv_kappa}). Indeed, to compensate for lighter masses on some circles, the neighbouring circles are closer.

This suggests that the general shape of the mass distribution is intrinsic to the spiderweb central configurations and deserves further study.

\begin{figure}
\centering
\subfigure[$\ell=2$]{
\label{fig:mass-a}
\includegraphics[height=2in]{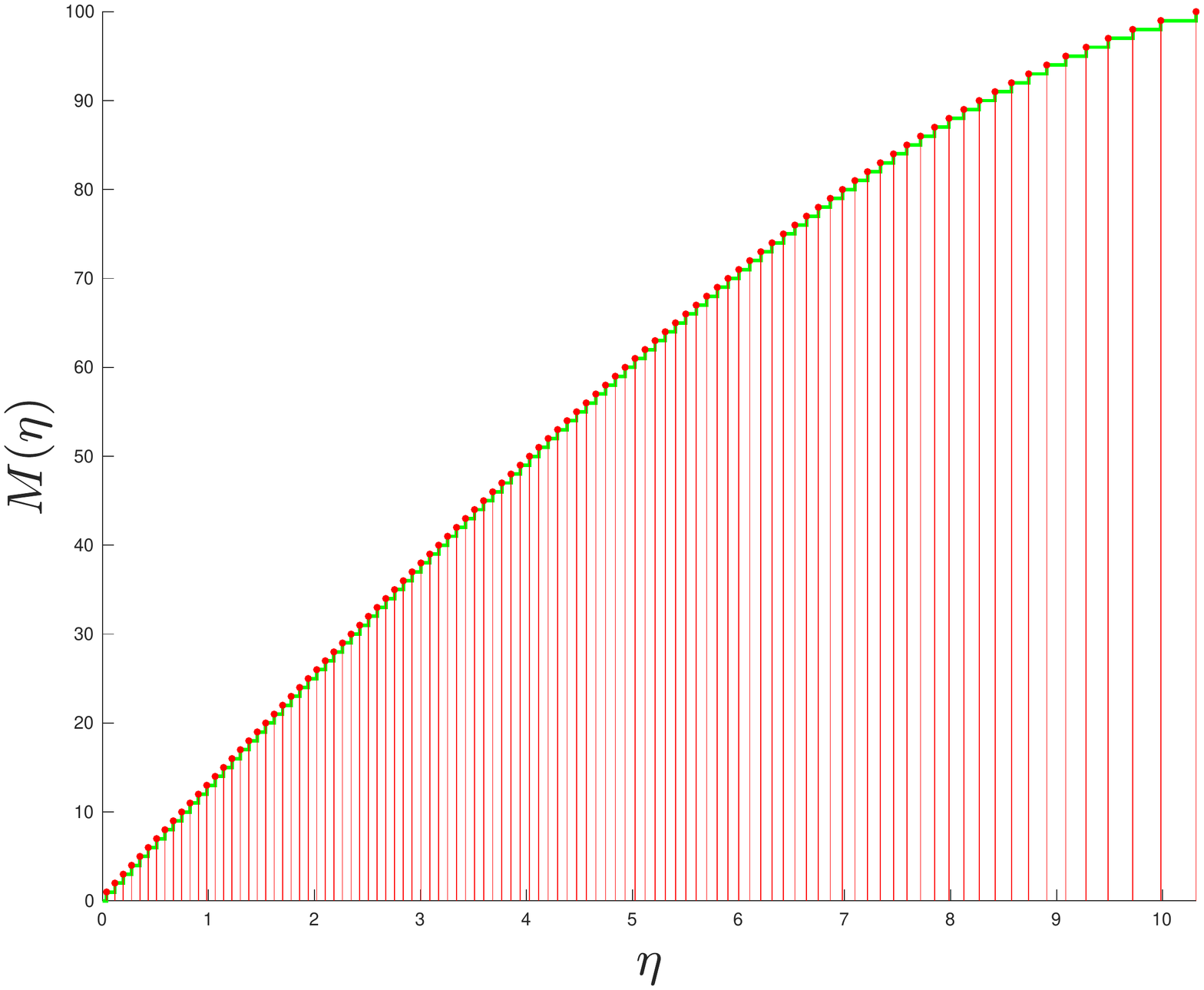}}
\hspace{8pt}
\subfigure[$\ell=6$]{
\label{fig:mass-b}
\includegraphics[height=2in]{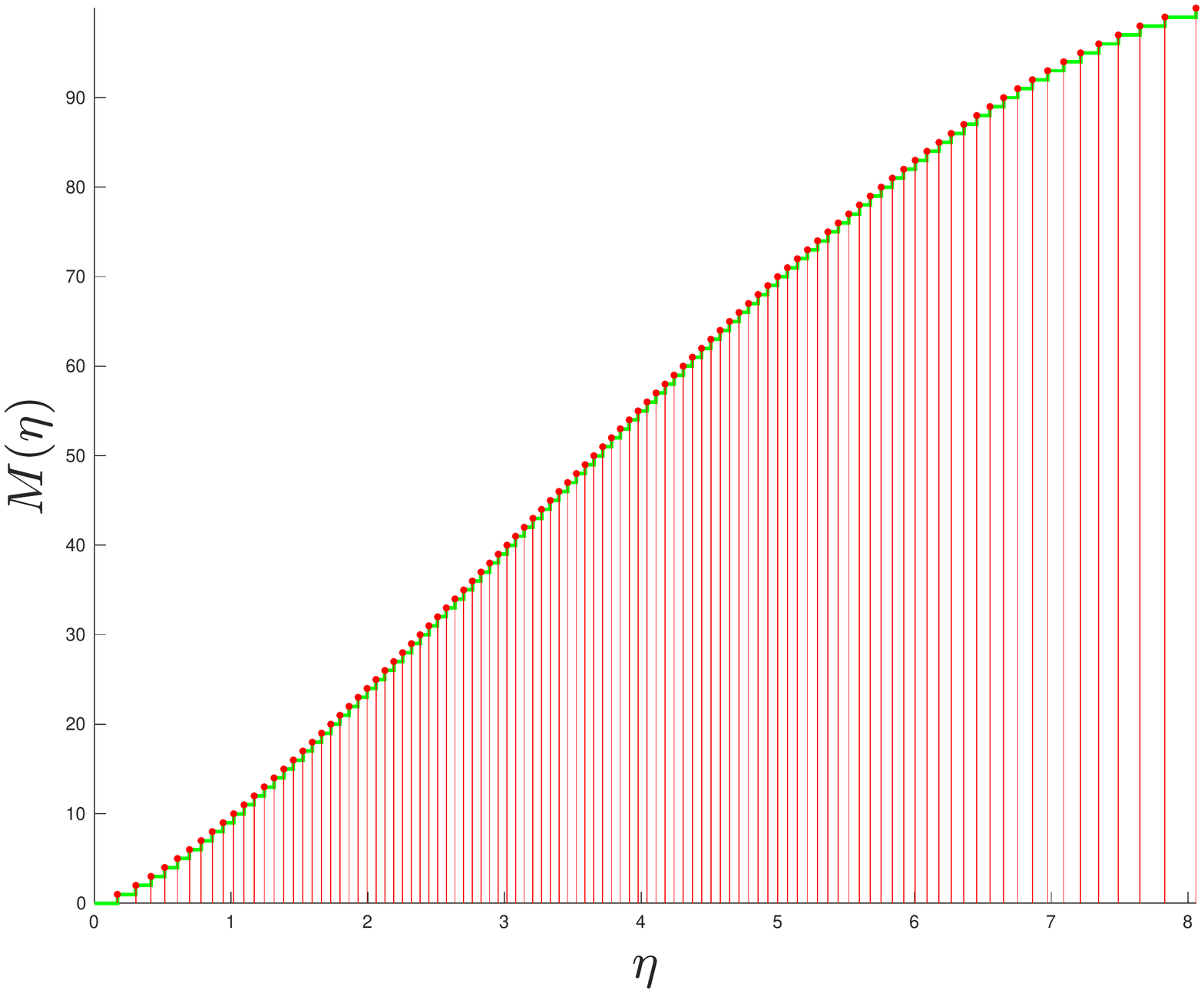}}\\
\subfigure[$\ell=100$]{
\label{fig:mass-c}
\includegraphics[height=2in]{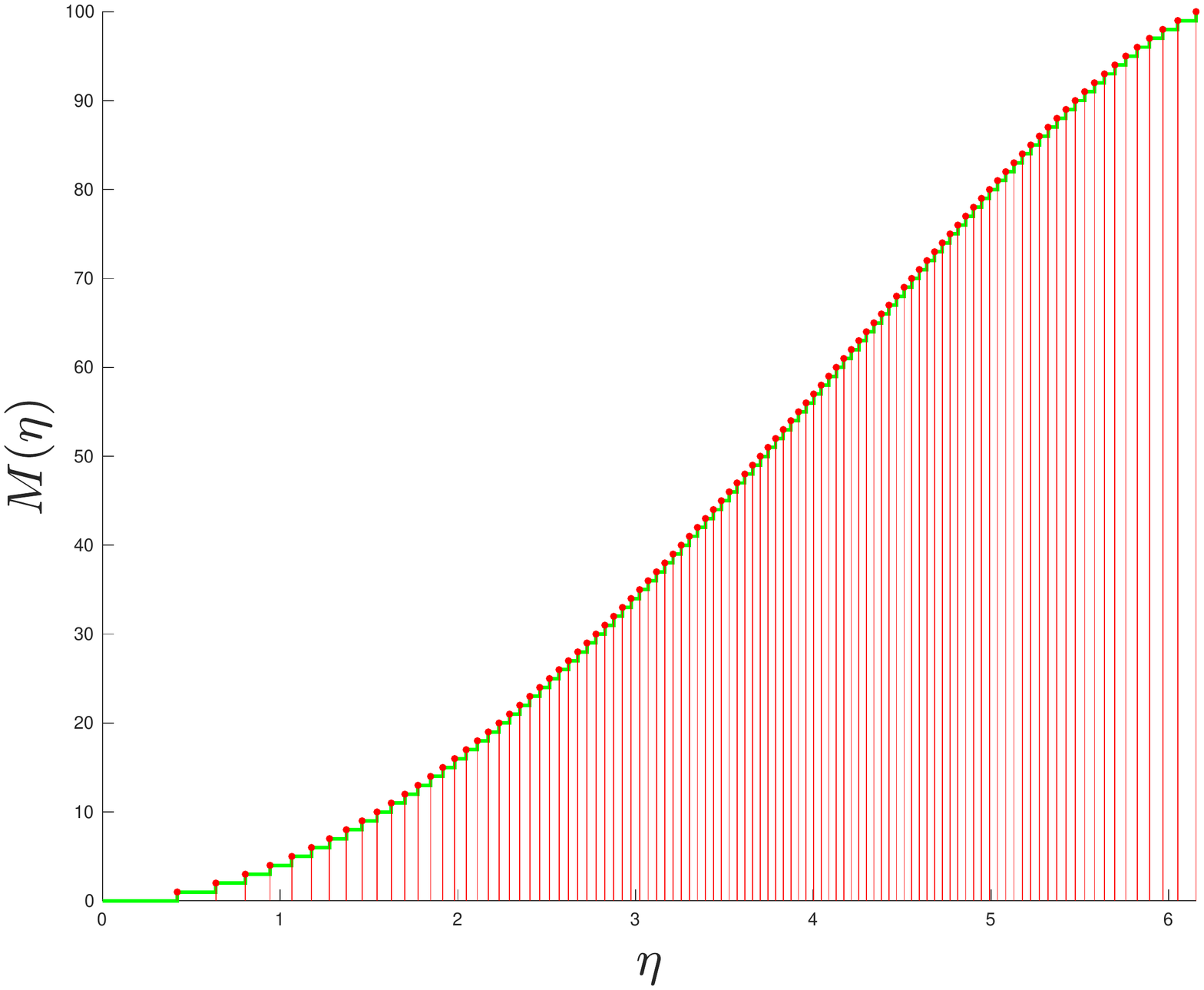}} 
\hspace{8pt}
\subfigure[$\ell=200$]{
\label{fig:mass-d}
\includegraphics[height=2in]{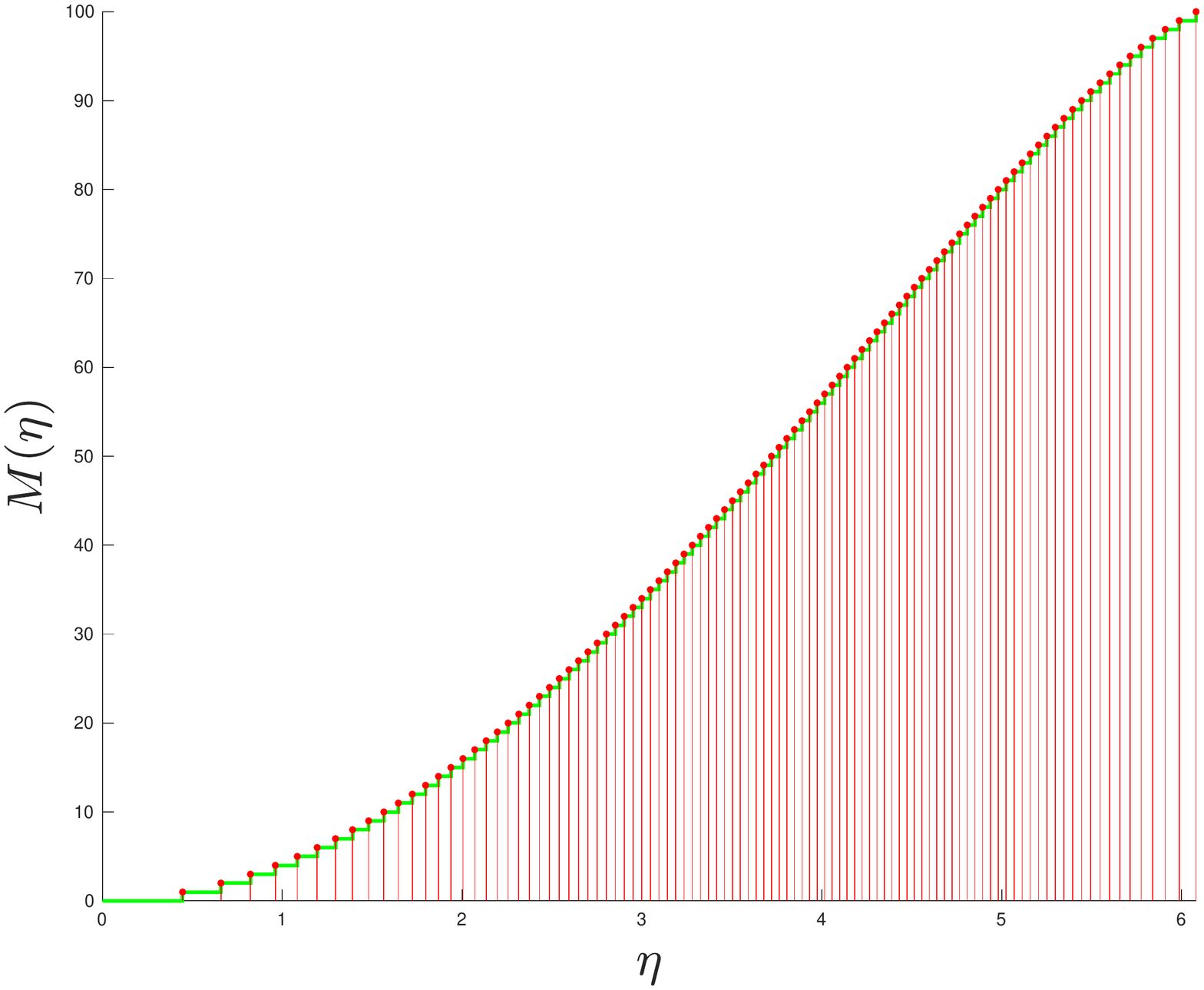}}
\caption{Mass distribution $M(\eta)$ of spiderweb central configurations with $\lambda=-1$, $n=100$, $(m_0,m)=(0,1/\ell,\dots,1/\ell)$ and different values of $\ell$.}
\label{fig:mass}
\end{figure}

\begin{figure}
\centering
\includegraphics[width = 3in]{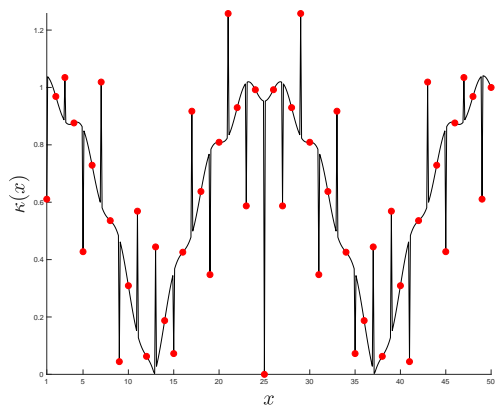}
\caption{Graph of $\kappa(x) = \left|\frac{\sin (21\pi(x-25))}{42\sin \left(\frac{\pi(x-25)}{2}\right)} + \cos\left(\frac{\pi x}{25}\right)\right|$ where $m_i=\kappa(i)$ for $i=1,\dots,50$ (\textcolor{red}{\textbullet}).}
\label{figKappa}
\end{figure}

\begin{figure}[ht]
\centering
\subfigure[Sequence $\{a_i\}$]{
\label{fig:radii-inv}
\includegraphics[height=2in]{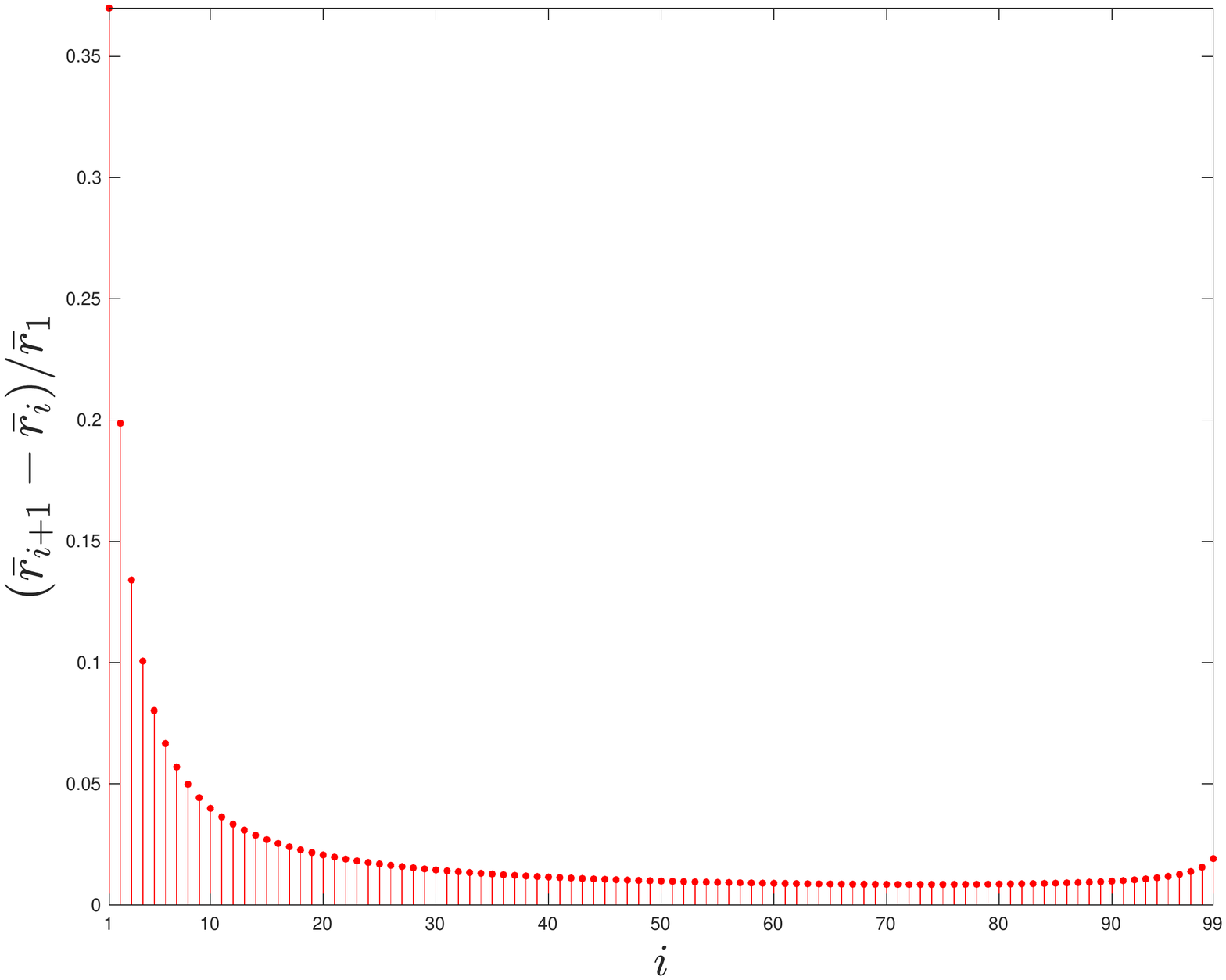}} 
\hspace{8pt}
\subfigure[Sequence $\{a_i\}$]{
\label{fig:radii-kappa}
\includegraphics[height=2in]{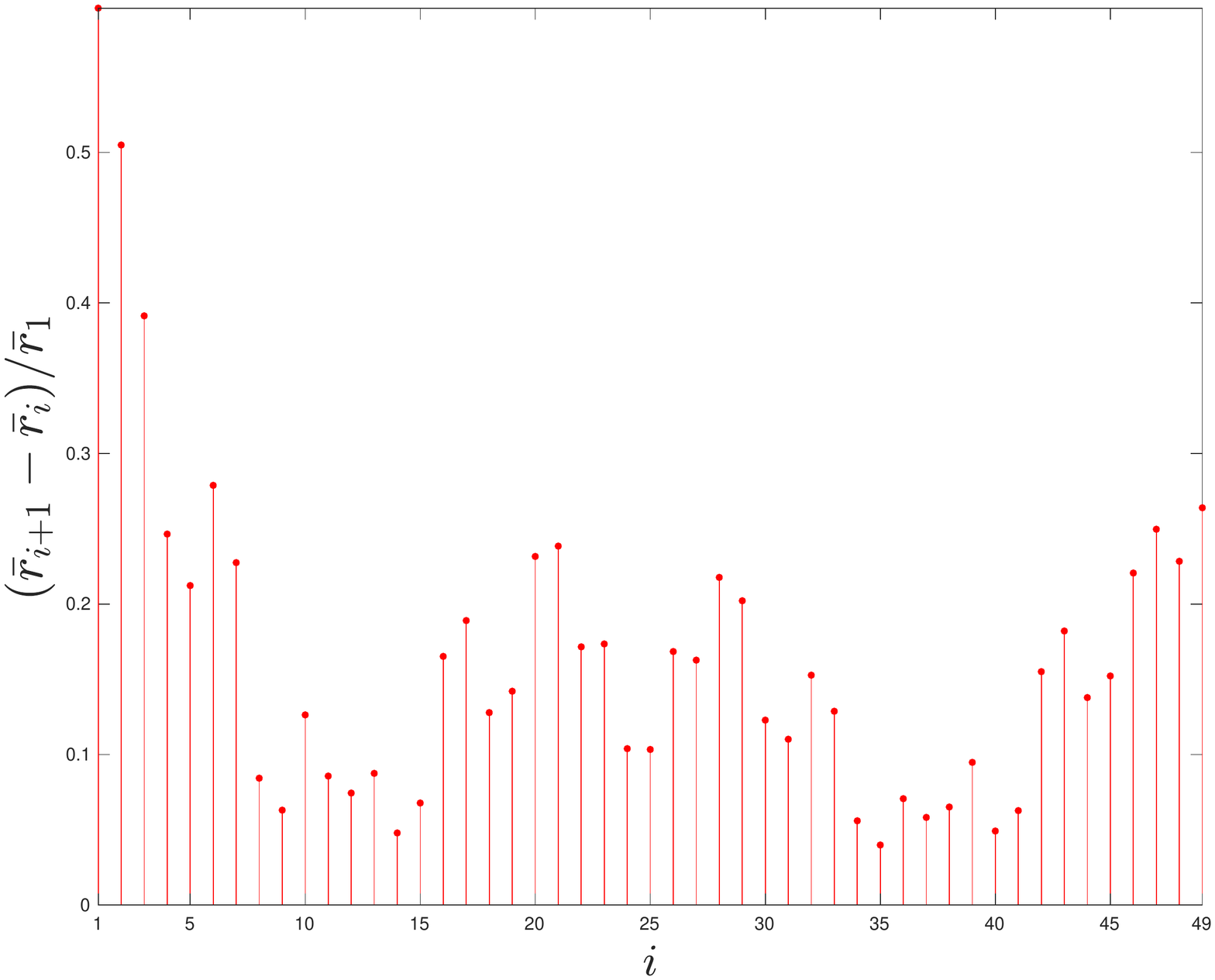}}\\
\subfigure[Corresponding $M(\eta)$]{
\label{fig:mass-inv}
\includegraphics[height=2in]{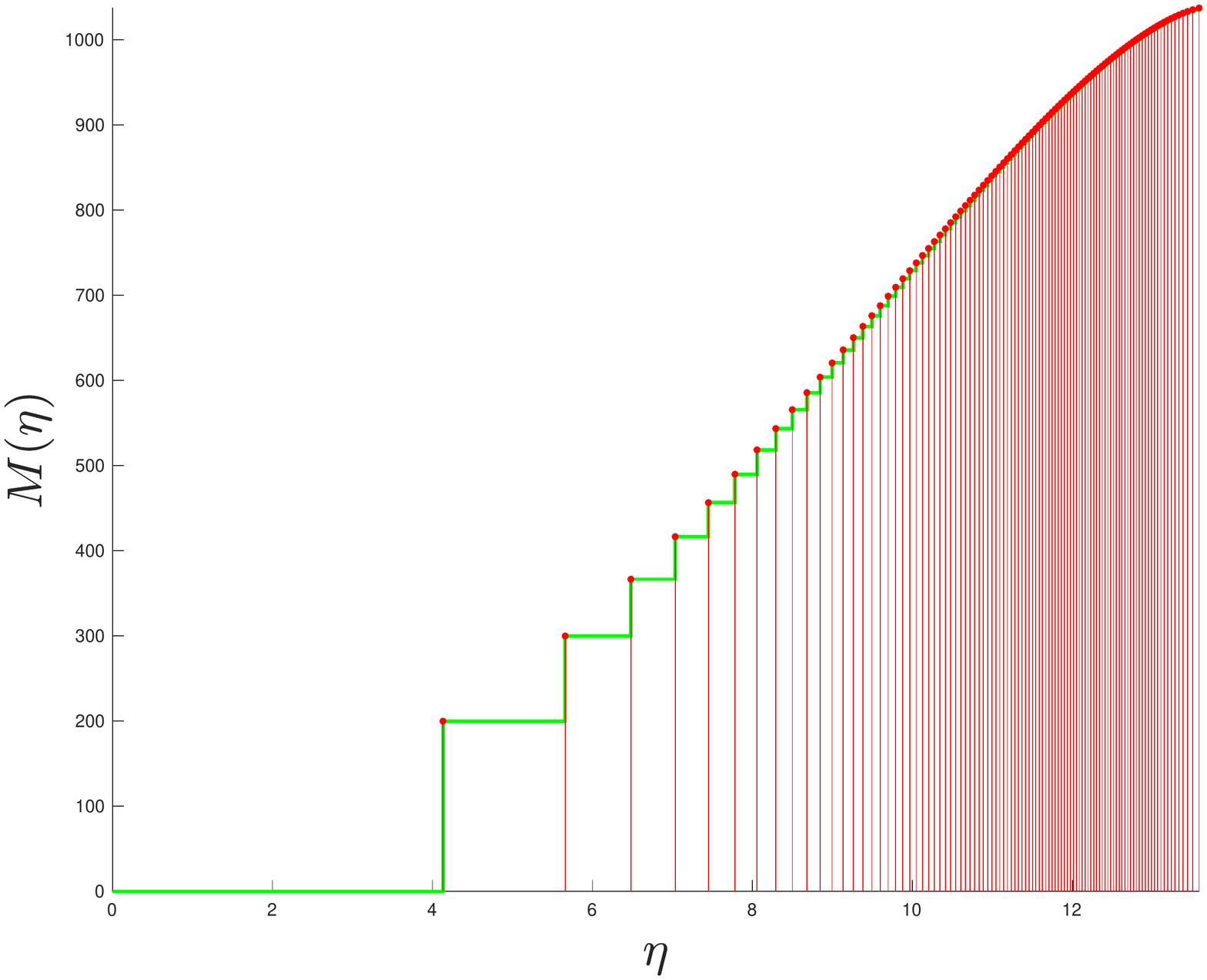}}
\hspace{8pt}
\subfigure[Corresponding $M(\eta)$]{
\label{fig:mass-kappa}
\includegraphics[height=2in]{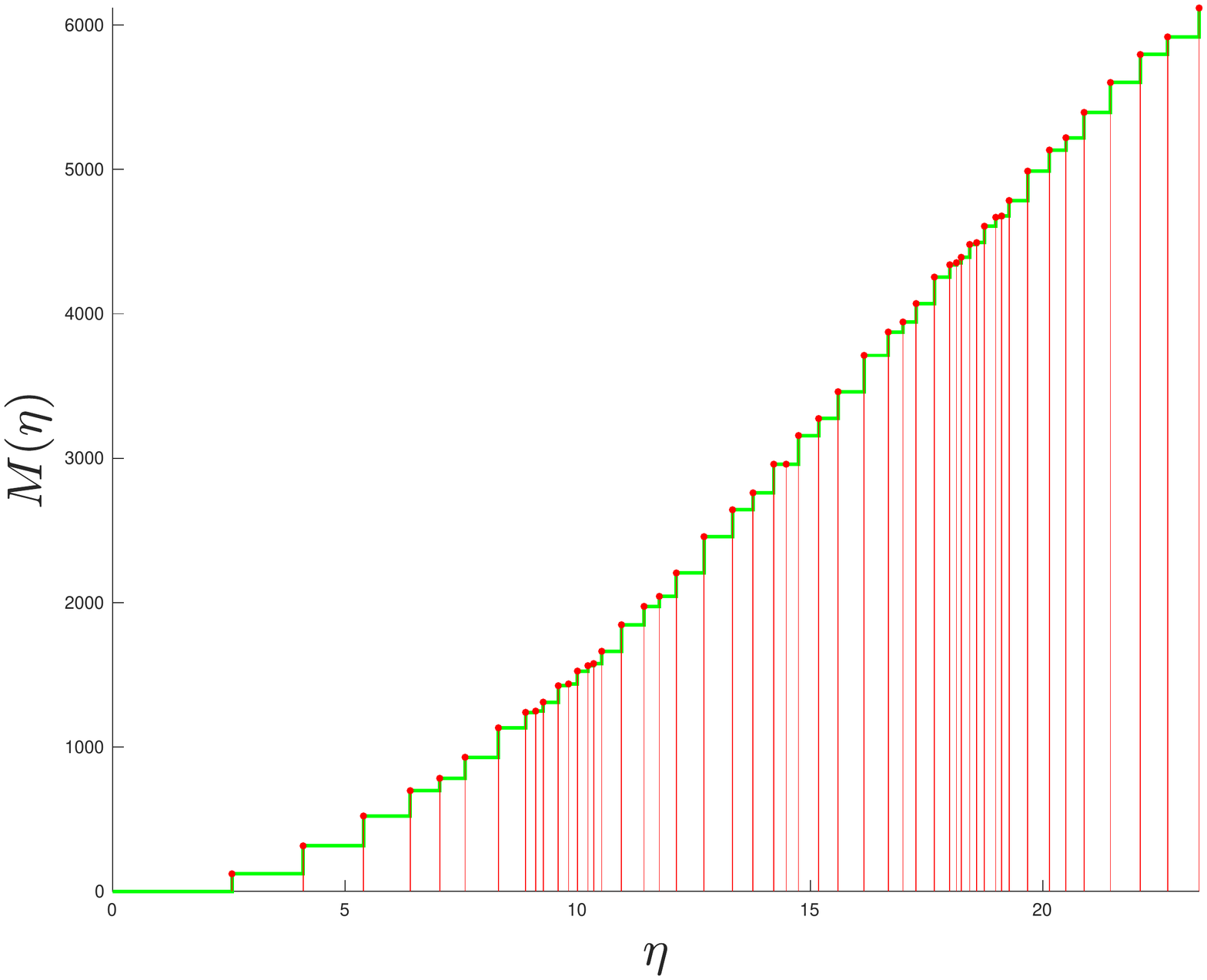}}
\caption{For $\lambda=-1$,
\subref{fig:radii-inv}-\subref{fig:radii-kappa} represent the spacing between consecutive circles for the spiderweb central configurations with $n=100$, $\ell=200$, $(m_0,m) = (0,1,1/2,\dots,1/100)$ and $n=50$, $\ell=200$, $(m_0,m) = (0,\kappa(1),\dots,\kappa(n))$ respectively, while
\subref{fig:mass-inv}-\subref{fig:mass-kappa} show their respective mass distribution. }
\label{fig:inv_kappa}
\end{figure}

\section{Acknowledgements} We are grateful to Jean-Philippe Lessard for helpful discussions and inputs on the computational part of the paper.


\begin{thebibliography}{CDL}

\bibitem{CDL} Corbera~M., Delgado~J. and Llibre~J., \emph{On the existence of central configurations of $p$ nested
$n$-gons}, Qual. Theory Dyn. Syst. \textbf{8} (2009), 255-265.

\bibitem{website} H\'enot~O., \emph{Proofs and animations for spiderweb configurations}, \url{http://dms.umontreal.ca/~rousseac/}.

\bibitem{Lessard} Hungria~A., Lessard~J.P. and Mireles~J.J.D., \emph{Rigorous numerics for analytic solutions of differential equations: the radii polynomial approach}, Math. Comp. \textbf{85}, 1427-1459.

\bibitem{Maxwell} Maxwell~James~Clerk, \emph{On the stability of the motion of Saturn's ring}, 1859, Macmillan and Co.

\bibitem{MS} Moeckel~R.  and Simo~C., \emph{Bifurcations of spatial central configurations from planar ones}, SIAM J. Math. Anal \textbf{26} (1995), 978--998.

\bibitem{Montaldi} Montaldi~J., \emph{Existence of symmetric central configurations}, Celestial Mechanics and Dynamical Astronomy \textbf{122} (2015), 405-418.

\bibitem{Mou} Moulton~F.R., \emph{The Straight Line Solutions of the Problem of $N$ Bodies}, Annals of Mathematics, Second Series \textbf{12}  (1910), 1--17.

\bibitem{Ru99a}
{S.M.} Rump.
\newblock {INTLAB - INTerval LABoratory}.
\newblock In Tibor Csendes, editor, {\em {Developments~in~Reliable Computing}},
  pages 77--104. Kluwer Academic Publishers, Dordrecht, 1999.
\newblock \url{http://www.ti3.tu-harburg.de/rump/}.

\bibitem{Sa1} Saari~D., \emph{Mathematics and the \lq\lq Dark Matter\rq\rq puzzle}, The American Mathematical Monthly \textbf{122} (2015), 407--423.

\bibitem{Sa2} Saari~D., \emph{$N$-body solutions and computing galactic masses}, The Astronomical Journal \textbf{149} (2015), 1--6.


\end{thebibliography}
\end{document}